\documentclass[a4paper,10pt]{article}
\usepackage{setspace}
\usepackage{diagrams}
\usepackage{geometry}
\usepackage{epsfig}
\usepackage{amsfonts}
\usepackage{amsthm}
\usepackage{latexsym}
\usepackage{amsmath}
\usepackage{amscd}
\usepackage{amssymb}
\usepackage{enumerate}
\usepackage{graphicx,oldgerm}
\usepackage{authblk}

\theoremstyle{definition}
\newtheorem{thm}{Theorem}[section]

\newtheorem{lem}[thm]{Lemma}

\newtheorem{prop}[thm]{Proposition}
\newtheorem{defi}[thm]{Definition}

\newtheorem{note}[thm]{Notation}
\newtheorem{para}[thm]{}

\DeclareMathOperator{\codim}{\mathrm{codim}}
\DeclareMathOperator{\NL}{\mathrm{NL}}
\DeclareMathOperator{\prim}{\mathrm{prim}}

\DeclareMathOperator{\red}{\mathrm{red}}

\DeclareMathOperator{\Hc}{\mathcal{H}om}

\DeclareMathOperator{\op}{\mathcal{O}_{\mathbb{P}^3}(d)}

\DeclareMathOperator{\p3}{\mathbb{P}^3}

\DeclareMathOperator{\pr}{\mathrm{pr}}

\DeclareMathOperator{\N}{\mathcal{N}}
\DeclareMathOperator{\T}{\mathcal{T}}

\DeclareMathOperator{\I}{\mathcal{I}}
\DeclareMathOperator{\mo}{\mathcal{O}}

\DeclareMathOperator{\NS}{\mathrm{NS}}

\newcommand{\mb}[1]{\mathbb{#1}}
\newcommand{\mc}[1]{\mathcal{#1}}
\newcommand{\mr}[1]{\mathrm{#1}}
\newcommand{\ov}[1]{\overline{#1}}
\title{On a Griffiths-Harris Conjecture}
\author{Ananyo Dan \thanks{The author has been supported by the DFG under Grant 
KL-$2244/2-1$\\ \newline Humboldt Universit\"{a}t zu Berlin, Institut f\"{u}r Mathematik, Unter den Linden $6$, Berlin $10099$.\\ e-mail: dan@mathematik.hu-berlin.de\\Mathematics Subject Classification: $14$C$30$, $14$D$07$}}
\date{\today}

\begin{document}
\maketitle
\doublespacing

\section{Introduction}
In $1882$, M. Noether claimed the following statement which was later proven by Lefschetz:
For $d \ge 4$, a very general smooth degree $d$ surface $X$ in $\mathbb{P}^3$ has Picard number $\rho(X)=1$.
This motivates the definition of the \emph{Noether-Lefschetz locus}, denoted by $\NL_d$ parametrizing the 
space of smooth degree $d$ surfaces $X$ in $\mathbb{P}^3$ with $\rho(X)>1$.
One of the interesting problems is to understand the geometry of the Noether-Lefschetz locus. By the Lefschetz $(1,1)$-theorem, we can look at an irreducible
component of the Noether-Lefschetz locus locally as a Hodge locus (see \cite[\S $5$]{v5} for more details).
In particular, denote by $U_d \subseteq \mathbb{P}(H^0(\mathbb{P}^3, \op))$ 
the open subscheme parametrizing smooth projective hypersurfaces in $\mathbb{P}^3$ of degree $d$.
Let $\mathcal{X} \xrightarrow{\pi} U_d$ be the corresponding universal family. For a given $F \in U_d$, denote by $X_F$ the surface $X_F:=\pi^{-1}(F)$. 
Let $X \in U_d$ and $U \subseteq U_d$ be a simply connected neighbourhood of $X$ in $U_d$ (under the analytic topology).
Then, $\pi|_{\pi^{-1}(U)}$ induces a variation of Hodge structure $(\mathcal{H}, \nabla)$ on $U$, where $\mathcal{H}:=R^2\pi_*\mathbb{Z} \otimes 
\mathcal{O}_U$ and $\nabla$ is the Gauss-Manin connection. Note that $\mathcal{H}$ defines a local system on $U$ whose fiber over 
a point $F \in U$ is $H^2(X_F,\mathbb{Z})$. Consider a non-zero element $\gamma_0 \in H^2(X_F,\mathbb{Z}) \bigcap H^{1,1}(X_F,\mathbb{C})$
such that $\gamma_0  \not= c_1(\mathcal{O}_{X_F}(k))$ for $k \in \mathbb{Z}_{>0}$. 
This defines a section $\gamma \in (\mathcal{H} \otimes \mathbb{C})(U)$. Let $\overline{\gamma}$ be the image
of $\gamma$ in $\mathcal{H}/F^2(\mathcal{H} \otimes \mathbb{C})$. The Hodge loci corresponding to $\gamma$, denoted $\NL(\gamma)$ is then defined as
\[ \NL(\gamma):=\{G \in U | \overline{\gamma}_G=0\},\]
where $\overline{\gamma}_G$ denotes the value at $G$ of the section $\overline{\gamma}$. For an irreducible component $L \subset \NL_d$ and $X \in L$, general, we can find $\gamma \in H^{1,1}(X,\mb{Z}):= 
H^2(X,\mb{Z}) \bigcap H^{1,1}(X,\mb{C})$ such that $\overline{\NL(\gamma)}=L$ (the closure taken in $U_d$ under Zariski topology).

One of the first results in this direction is due to Green, Griffiths, Voisin and others (\cite{M3, GH, v2}) which states that 
for an irreducible component $L$ of the Noether-Lefschetz locus, that for $d \ge 4$,
\[d-3 \le \codim (L,U_d) \le \binom{d-1}{3}.\] 
The upper bound follows easily from the fact that $\dim H^{2,0}(X)=\binom{d-1}{3}$ for any $X \in U_d$ (see \cite[\S $6$]{v5}).
We say that $L$ is a \emph{general} component if $\codim L=\binom{d-1}{3}$ and \emph{special} otherwise. 
It was proven by Ciliberto, Harris and Miranda \cite{ca1} that for $d \ge 4$, 
the Noether-Lefschetz locus has infinitely many general components and the union of these components is Zariski dense in $U_d$. 
The guiding principle of much work in the area has been the expectation that special components should be due to the presence of low degree curves.
Voisin \cite{v3} and Green \cite{M3} independently 
proved that for $d \ge 5$, $\codim L=d-3$ if and only if $L$ parametrizes surfaces of degree $d$ containing a line. 
If $d-3<\codim L \le 2d-7$ then $\codim L=2d-7$ and $L$ parametrizes the surfaces containing a conic.
Otwinowska \cite{A1} proved that for an integer $b>0$ and $d \gg b$ if $\codim L \le bd$ then $L$ parametrizes surfaces containing a curve of 
degree at most $b$.

For $r \ge 3$, we define the level $r$-\emph{Noether-Lefschetz locus}, denoted $\NL_{r,d}$ to be the space parametrizing surfaces with 
Picard number greater than or equal to $r$. It has been conjectured by Griffiths and Harris in \cite{GH} that for $r < d$, an irreducible component of $\NL_{r,d}$ is 
of codimension greater than or equal to $(r-1)(d-3)-\binom{r-3}{2}$. Furthermore, the component of $\NL_{r,d}$ parametrizing
surfaces containing $r-1$ coplanar lines is of this codimension. In this article we prove (in Theorem \ref{gh6}) that:
\begin{thm}\label{ite1}
 Let $r \ge 3$ and $d \gg  r$. Let $L$ be an irreducible component of $\NL_{r,d}$. Then, $\codim L \ge (r-1)(d-3)-\binom{r-3}{2}$.
Furthermore, there exists a component $L$ of $\NL_{r,d}$ of this codimension parametrizing surfaces containing $r-1$ coplanar lines.
\end{thm}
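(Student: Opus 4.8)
\emph{Strategy via infinitesimal variation of Hodge structure.} The plan is to work at a general point of $L$ and translate the codimension bound into a rank computation in the Jacobian ring. Let $X = V(F) \subset \mathbb{P}^3$ be a general member of $L$, write $S = \mathbb{C}[x_0,\ldots,x_3]$, let $J_F = (\partial_0 F,\ldots,\partial_3 F)$ be the Jacobian ideal and $R = S/J_F$ the (Artinian, Gorenstein) Jacobian ring, whose socle lies in degree $4d-8$. Since $\rho(X) \ge r$, after choosing a rank-$r$ saturated sublattice of $\mathrm{NS}(X)$ containing $c_1(\mathcal{O}_X(1))$ and keeping the classes that remain Hodge along $L$ (these exist by the local-system description of $L$ recalled above), I obtain $r-1$ independent primitive Hodge classes which, by Griffiths' theorem, correspond to independent elements $\lambda_1,\ldots,\lambda_{r-1} \in R_{2d-4} \cong H^{1,1}_{\mathrm{prim}}(X)$. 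Under the identifications $H^1(X,T_X) \cong R_d$ and $H^{0,2}(X) \cong R_{3d-4}$, the derivative of the period map sends a first-order deformation $v$ to the obstruction $\lambda_i v \in R_{3d-4}$; hence locally $L \subseteq \bigcap_i \NL(\lambda_i)$ and
\[ T_X L \subseteq \bigcap_{i} \ker\!\big(R_d \xrightarrow{\,\cdot\,\lambda_i\,} R_{3d-4}\big) = \ker\phi, \qquad \phi(v) = (\lambda_1 v,\ldots,\lambda_{r-1}v). \]
Because $\NL_{r,d}$ is $\mathrm{PGL}_4$-invariant and the period derivative factors through $R_d$, this yields $\codim L \ge \rk\phi$ with no reducedness hypothesis.

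\emph{Reduction to a multiplication estimate.} Next I would dualize. Gorenstein (Macaulay) duality for $R$ identifies $R_{3d-4}^\vee \cong R_{d-4}$ and $R_d^\vee \cong R_{3d-8}$, and transforms $\phi$ into
\[ \psi : \bigoplus_{i=1}^{r-1} R_{d-4} \longrightarrow R_{3d-8}, \qquad \psi\big((\mu_i)_i\big) = \sum_{i=1}^{r-1} \lambda_i \mu_i, \]
so that $\rk\phi = \rk\psi = \dim\big(\sum_{i=1}^{r-1} \lambda_i R_{d-4}\big)$. The theorem is therefore reduced to the purely algebraic lower bound
\[ \dim\Big(\sum_{i=1}^{r-1} \lambda_i R_{d-4}\Big) \ge (r-1)(d-3) - \binom{r-3}{2} \]
for any $r-1$ independent $\lambda_i \in R_{2d-4}$, with $d \gg r$. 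I would prove this by induction on the number $m$ of classes, with target $\dim(\sum_{i\le m}\lambda_i R_{d-4}) \ge m(d-3) - \binom{m-2}{2}$. The base case $m=1$ is Green's inequality $\dim(\lambda_1 R_{d-4}) \ge d-3$ (recalled above as the source of the classical bound $\codim \ge d-3$), with equality exactly for the class of a line. The inductive step reads $\dim(\sum_{i\le m+1}) = \dim(\sum_{i\le m}) + \dim(\lambda_{m+1}R_{d-4}) - o_m$, where $o_m := \dim\big((\lambda_{m+1}R_{d-4}) \cap \sum_{i\le m}\lambda_i R_{d-4}\big)$, and the induction closes provided the overlap satisfies $o_m \le \max(0,m-2)$, since $\sum_{m}\max(0,m-2) = \binom{r-3}{2}$.

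\emph{The main obstacle.} The crux is precisely the overlap bound $o_m \le \max(0,m-2)$. An element of the intersection is a relation $\lambda_{m+1}\mu = \sum_{i\le m}\lambda_i\mu_i$ in $R_{3d-8}$, and controlling the space of such relations is exactly where the hypothesis $d \gg r$ and the fine structure of the $\lambda_i$ must enter. In the polynomial ring a nontrivial overlap forces the lifts of the $\lambda_i$ to share common factors of large degree, and I expect to show, using Macaulay's bound on the Hilbert functions of ideals in $R$ together with Green's classification of the extremal classes, that overlaps can be produced only by shared linear forms; the maximal such accumulation occurs for classes supported on a common plane, which is exactly the coplanar-lines configuration and accounts for the correction $\binom{r-3}{2}$. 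Making this rigorous — in particular, ruling out exotic relations in the Artinian ring $R$ that have no counterpart in $S$ — is the hardest and most technical part of the argument.

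\emph{Sharpness and the component of coplanar lines.} For the final assertion I would exhibit the component directly. Fix a plane $H \cong \mathbb{P}^2$ and $r-1$ lines $\ell_1,\ldots,\ell_{r-1} \subset H$; requiring a degree-$d$ surface to contain them forces $X|_H$ to be divisible by $\ell_1\cdots\ell_{r-1}$, which is a condition of codimension $\binom{d+2}{2} - \binom{d-r+3}{2}$ on $X$. Letting $H$ vary ($3$ parameters) and the lines vary in $H$ ($2(r-1)$ parameters), the incidence variety is irreducible and maps generically finitely for $d\gg r$, so its image $L \subset U_d$ has
\[ \codim L = \binom{d+2}{2} - \binom{d-r+3}{2} - 3 - 2(r-1) = (r-1)(d-3) - \binom{r-3}{2}, \]
the last equality being a direct binomial identity. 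For general such $X$ the classes $c_1(\mathcal{O}_X(1)),[\ell_1],\ldots,[\ell_{r-1}]$ are independent in $\mathrm{NS}(X)$, so $\rho(X) \ge r$ and $L \subseteq \NL_{r,d}$; since every component has codimension at least this value by the first part, $L$ cannot be properly contained in another component of $\NL_{r,d}$ and is therefore itself an irreducible component of the stated codimension.
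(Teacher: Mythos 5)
Your proposal has a genuine gap, and it sits exactly where the whole difficulty of the theorem lives. The IVHS set-up (tangent space of $L$ inside $\ker\phi$, Macaulay--Gorenstein duality turning $\rk \phi$ into $\dim\big(\sum_i \lambda_i R_{d-4}\big)$, and the arithmetic $\sum_m \max(0,m-2)=\binom{r-3}{2}$) is correct and standard, and your sharpness construction for coplanar lines is essentially right (your binomial identity checks out, and it matches the equality statement the paper proves in Proposition \ref{gh13}). But the inductive overlap bound $o_m \le \max(0,m-2)$ is precisely the hard content of the lower bound, and you do not prove it: you say you ``expect to show'' it via Macaulay bounds and Green's classification of extremal classes. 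That expectation is not a proof, and the statement as formulated is not even well-posed: the individual overlaps $o_m$ depend on the choice and ordering of the basis $\lambda_1,\dots,\lambda_{r-1}$ of the primitive part of the lattice (only the total $\dim\sum_i\lambda_i R_{d-4}$ is basis-independent), so a naive induction can fail for a bad basis, and the argument would have to produce a good basis and rule out ``exotic'' relations in the Artinian ring $R$ --- which is exactly the technical work you defer. The hypothesis $d\gg r$ enters nowhere in your argument for the lower bound, which is a red flag, since the bound is false without it.

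For comparison, the paper does not attack the multiplication maps directly. It quotes Otwinowska's theorem (\cite[Theorem $1$]{ot}, stated as Theorem \ref{gh22}) as a black box to conclude that a component of small codimension is, at a general point, cut out by a lattice generated by classes of curves of degree at most $r-1$ (Propositions \ref{c3} and \ref{gh14}, using Bloch's semi-regularity to keep the curves effective along the locus); it then converts the codimension of $\NL(\Lambda)$ into a dimension count on flag Hilbert schemes (Proposition \ref{gh12}), uses the Eisenbud--Harris bound on Hilbert scheme dimensions to force all the curves to be lines (Lemma \ref{gh4} --- this is where $d \ge r^3$ is used), and finally runs a geometric induction on configurations of lines, keeping track of incidences, to get $\codim \NL(\Lambda)\ge (r-1)(d-3)-\binom{r-3}{2}$ with equality for coplanar lines (Proposition \ref{gh13}). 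In effect, the Jacobian-ring estimate you would need to prove from scratch is subsumed in the cited result of Otwinowska; without either that citation or a complete proof of your overlap bound, your argument for the main inequality is incomplete.
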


The techniques used to prove this result is a combination of deformation theory and Hodge theory. Instead of looking at the Hodge locus corresponding to a Hodge class, we study the Hodge locus corresponding to
a $\mb{Z}$-module of Hodge classes. We then use a result due to Otwinowska, \cite[Theorem $1$]{ot}, to show that if the codimension of an irreducible component $L$ of $\NL_{r,d}$
is less than or equal to $(r-1)(d-3)-\binom{r-3}{2}$, then for a general $X \in L$ there exists a lattice $\Lambda \subset H^{1,1}(X,\mb{Z})$ generated by classes of curves of degree less than or
equal to $r-1$ such that $L$ is locally of the form $\NL(\Lambda)$ (see Proposition \ref{gh14}), where $\NL(\Lambda)$ is the intersection of $\NL(\gamma)$ for all $\gamma \in \Lambda$,
$\bigcap_{\gamma \in \Lambda} \NL(\gamma)$.

We now use the theory of semi-regularity as introduced in \cite{b1} to reduce the problem to a question in flag Hilbert schemes. 
First to fix some notations, for a Hilbert polynomial $P$ for some curve $C$ in $\p3$, we denote by $H_P$ the corresponding Hilbert scheme, parametrizing curves (schemes with pure dimension $1$) with 
Hilbert polynomial $P$.
Throughout this article we denote by $Q_d$ the Hilbert polynomial of a degree $d$
surface in $\p3$. We denote by $H_{P,Q_d}$ the corresponding flag Hilbert scheme parametrizing pairs $(C,X)$ such that $C \in H_P, X \in H_{Q_d}$ and $C \subset X$.
A curve $C$ on a smooth surface in $\p3$ is said to be \emph{semi-regular} if $H^1(\mo_X(C))=0$. 
We prove that
\begin{thm}
 Let $X$ be a smooth surface in $\p3$ of degree $d$ and $C \subset X$, a semi-regular curve with Hilbert polynomial, say $P$. For any irreducible component, $L'$ of $\ov{\NL([C])}$
 (the closure is taken in $U_d$ under Zariski topology)
 there exists an irreducible component $H'$ of $H_{P,Q_d}$ containing the pair $(C,X)$ such that $\pr_2(H')_{\red}$ 
 coincides with $L'_{\red}$, where $\pr_2$ is 
 the second projection map from $H_{P,Q_d}$ to $H_{Q_d}$. In particular, if $C$ is reduced, connected and $d \ge \deg(C)+4$ then this holds true and the irreducible component $H'$ is uniquely determined by $L'$.
\end{thm}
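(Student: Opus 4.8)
The plan is to reduce the statement to an identification of Zariski tangent spaces at $(C,X)$ and then to globalize it using the smoothness furnished by semi-regularity. First I set up the infinitesimal picture. I identify $T_XH_{Q_d}\cong H^0(X,\N_{X/\p3})$, where $\N_{X/\p3}\cong\mo_X(d)$. From the normal-bundle sequence
$$0\to\N_{C/X}\to\N_{C/\p3}\to\N_{X/\p3}|_C\to 0$$
a first-order deformation of the pair $(C,X)$ is a pair $(\alpha,\beta)$ with $\beta\in H^0(\N_{X/\p3})$ deforming $X$ and $\alpha\in H^0(\N_{C/\p3})$ deforming $C$, subject to the compatibility $\rho(\alpha)=\beta|_C$ in $H^0(\N_{X/\p3}|_C)$, where $\rho$ is induced by the surjection above. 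Since $d\pr_2(\alpha,\beta)=\beta$, the long exact cohomology sequence yields
$$\Ima\!\big(d\pr_2\colon T_{(C,X)}H_{P,Q_d}\to T_XH_{Q_d}\big)=\ker\psi,\qquad \psi\colon H^0(\N_{X/\p3})\xrightarrow{\,|_C\,}H^0(\N_{X/\p3}|_C)\xrightarrow{\,\delta\,}H^1(\N_{C/X}),$$
with $\delta$ the connecting homomorphism.

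Next I bring in Hodge theory. By Griffiths' theory of the infinitesimal variation of Hodge structure, $T_X\NL([C])$ is the kernel of the map $\bar\nabla_{[C]}\colon H^0(\N_{X/\p3})\to H^2(X,\mo_X)$ given by cup-product of the Kodaira--Spencer class with $[C]$ followed by projection to $H^{0,2}$. The essential input from the theory of semi-regularity \cite{b1} is the compatibility $\bar\nabla_{[C]}=\pi\circ\psi$, where $\pi\colon H^1(\N_{C/X})\to H^2(\mo_X)$ is Bloch's semi-regularity map, the connecting homomorphism of
$$0\to\mo_X\to\mo_X(C)\to\N_{C/X}\to 0.$$
Since $C$ is semi-regular, $H^1(\mo_X(C))=0$, so this sequence forces $\pi$ to be injective. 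Hence $\ker\bar\nabla_{[C]}=\ker(\pi\circ\psi)=\ker\psi$, and comparing with the first paragraph gives the key identity
$$T_X\NL([C])=\ker\bar\nabla_{[C]}=\ker\psi=\Ima\!\big(d\pr_2\big).$$

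It remains to pass from tangent spaces to components. A small deformation of $(C,X)$ in $H_{P,Q_d}$ yields a surface carrying a curve homologous to $C$, so over a simply connected neighbourhood $\pr_2(H_{P,Q_d})$ is contained in $\NL([C])$; thus for each component $H'$ through $(C,X)$ the irreducible set $\ov{\pr_2(H')}$ lies in a single component $L'$ of $\ov{\NL([C])}$. Conversely, semi-regularity controls dimensions: by Bloch's theorem the Hilbert scheme of curves on $X$ is smooth at $[C]$ of dimension $h^0(\N_{C/X})$, and $H_{Q_d}$ is smooth at $X$ because $H^1(\N_{X/\p3})=H^1(\mo_X(d))=0$. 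Feeding this into the differential computation, $\Ima(d\pr_2)$ exhausts the full Zariski tangent space $T_X\NL([C])$, and a component-by-component dimension count then forces $\ov{\pr_2(H')}$ to be a whole component and every $L'$ to be hit. I expect this globalization to be the main obstacle: $\NL([C])$ need not be reduced, and several of its components (and several components of $H_{P,Q_d}$) may pass through $(C,X)$, so one must work throughout with reduced structures and argue, by comparing dimensions, that the correspondence $H'\mapsto L'$ is surjective onto the set of components of $\ov{\NL([C])}$ through $X$, thereby obtaining $\pr_2(H')_{\red}=L'_{\red}$.

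Finally, for the last assertion I verify that the hypotheses force both semi-regularity and uniqueness. When $C$ is reduced and connected, Serre duality on $X$ and the adjunction $\omega_X\cong\mo_X(d-4)$ identify $H^1(\mo_X(C))$ with the dual of the cokernel of the restriction $H^0(X,\mo_X(d-4))\to H^0(C,\mo_C(d-4))$; thus semi-regularity is equivalent to the surjectivity of this map. The latter is implied by $H^1(\p3,\I_{C/\p3}(d-4))=0$ together with the evident surjectivity $H^0(\mo_{\p3}(d-4))\twoheadrightarrow H^0(\mo_X(d-4))$, and for $d\ge\deg(C)+4$ the vanishing $H^1(\I_{C/\p3}(d-4))=0$ follows from the Castelnuovo--Mumford regularity bound for reduced connected curves of degree $\deg(C)$ in $\p3$, since then $d-4$ is at least the regularity. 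With semi-regularity in hand, $H^1(\N_{X/\p3})=0$ and the Bloch smoothness of the Hilbert scheme of curves on $X$ make $H_{P,Q_d}$ smooth at $(C,X)$; hence a single component passes through $(C,X)$, and it must be the component $H'$ attached to $L'$, which gives the asserted uniqueness.
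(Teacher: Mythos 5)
Your infinitesimal analysis is correct---the identity $\Ima(d\pr_2)=\ker\psi=\ker\ov{\nabla}([C])=T_X\NL([C])$ is exactly the tangent-level content of semi-regularity---but it is a statement at the single point $(C,X)$, and the theorem does not follow from it. The first assertion requires producing, for \emph{every} irreducible component $L'$ of $\ov{\NL([C])}$, curves with Hilbert polynomial $P$ on the surfaces of $L'$ other than $X$; equivalently, one must lift $C$ along arbitrary Artinian deformations of $X$ inside the Hodge locus. That is an obstruction-theoretic statement: the semi-regularity map carries the obstruction to deforming the pair $(C,X)$ to the obstruction to $[C]$ remaining of type $(1,1)$, so that the deformation functor of the pair is formally smooth over the Hodge-locus functor. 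This is precisely \cite[Theorem $7.1$]{b1}, which the paper simply cites for the first part; your proposal never proves it, and the ``component-by-component dimension count'' offered in its place does not substitute for it. Both $\NL([C])$ (with its natural scheme structure) and $H_{P,Q_d}$ may be singular, even non-reduced, at $X$ and $(C,X)$, and at a singular point the rank of a differential can strictly exceed the dimension of the image (the inclusion of two crossing lines in the plane has differential of rank $2$ at the node, while the image has dimension $1$), so tangent-space dimensions bound neither $\dim L'$ nor $\dim \pr_2(H')$ in the way you need. Moreover, your identity computes the image of $d\pr_2$ on the full tangent space $T_{(C,X)}H_{P,Q_d}$, not on the tangent space of an individual component $H'$; so even granting dimension bounds, the argument could only show that \emph{some} component of $\ov{\NL([C])}$ is dominated, and cannot rule out that one of several components through $X$ carries no curve at all away from $X$.

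There is a second gap in the uniqueness step: the claim that semi-regularity, $H^1(\N_{X|\p3})=0$ and smoothness of the Hilbert scheme of curves on $X$ force $H_{P,Q_d}$ to be smooth at $(C,X)$ is unjustified and in general false. The obstruction space of the flag Hilbert scheme contains the cokernel of $H^0(\N_{C|\p3})\oplus H^0(\N_{X|\p3})\to H^0(\N_{X|\p3}|_C)$, a subquotient of $H^1(\N_{C|X})$, and $H^1(\N_{C|X})$ is typically nonzero here (for a line on $X$ it has dimension $d-3$); indeed, since Bloch's theorem makes $\pr_2$ formally smooth over the Hodge locus at $(C,X)$, smoothness of $H_{P,Q_d}$ at $(C,X)$ is \emph{equivalent} to smoothness of $\NL([C])$ at $X$, which is neither assumed nor known. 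The paper's uniqueness argument is different and avoids this: by Lemma \ref{a4e}, $d\ge\deg(C)+4$ forces $h^0(\N_{C|X})=0$, i.e. $\dim|C|=0$, so over a general surface of $L'$ the curve in the transported class is unique, $\pr_2$ is generically one-to-one on any component $H'$ dominating $L'$, and any two such components must coincide. (By contrast, your verification of semi-regularity for reduced connected $C$ with $d\ge\deg(C)+4$, via Serre duality and the $\deg(C)$-regularity of $\I_C$, is correct and is essentially the paper's Lemma \ref{hf11}.)
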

See Theorem \ref{dim} and Lemma \ref{hf11} for the precise statements and its proof. Using this proof we further show that under the above 
bound on the codimension of $L$, the lattice will infact be generated by classes of lines (see Lemma \ref{gh4}). 
Finally, we do a computation in Proposition \ref{gh13}, to determine the ``arrangement'' of these lines which would 
help us determine the component with the correct codimension.

{\bf{Acknowledgement:}} I would like to thank Prof. R. Kloosterman for reading the preliminary version of this article and several helpful discussions. 

\section{Introduction to Noether-Lefschetz locus}

\begin{para}
 In this section we recall the basic definitions of Noether-Lefschetz locus. See \cite[\S $9, 10$]{v4} and \cite[\S $5, 6$]{v5} for a detailed presentation of the subject.
\end{para}

\begin{note}\label{n1}
By a \emph{component} of $\NL_d$, we mean an irreducible component. By a \emph{surface} we always mean a smooth surface in $\mathbb{P}^3$.
Denote by $Q_d$ the Hilbert polynomial of degree $d$ surfaces in $\p3$. Given, a Hilbert polynomial $P$, denote by $H_P$ the corresponding Hilbert scheme and by $H_{P,Q_d}$ the corresponding flag Hilbert scheme.
Also, for a point $u \in U_d$, denote by $X_u$ the fiber $\pi^{-1}(u)$.
\end{note}

\begin{note}\label{nl1}
 Let $X \in U_d$ and $\mo_X(1)$, the very ample line bundle on $X$ determined by the closed immersion $X \hookrightarrow \p3$ arising (as in \cite[II.Ex.$2.14$(b)]{R1}) from the graded homomorphism
of graded rings $S \to S/(F_X)$, where $S=\Gamma_*(\mo_{\p3})$ and $F_X$ is the defining equations of $X$.
Denote by $H_X$ the very ample line bundle $\mo_X(1)$. 
Note that a very ample line bundle on $X_u$ for any $u \in U$ remains very ample in the family $\mc{X}$, hence the corresponding cohomology class remains of type $(1,1)$ 
in $\mc{X}$.
\end{note}

     \begin{para}
     Let $X$ be a surface.
      The Lefschetz hyperplane section theorem implies, \[H^{2}(X,\mb{C})\cong H^{2}(X,\mb{C})_{\prim} \oplus \mb{C}H_X,\]
      where $H_X$ is the very ample line bundle on $X$ and $H^2(X,\mb{C})_{\prim}$ is the primitive cohomology. 
      This gives us a natural projection map from $H^{2}(X,\mb{C})$ to $H^{2}(X,\mb{C})_{\prim}$. For $\gamma \in H^{2}(X,\mb{C})$, denote by $\gamma_{\prim}$ the image of $\gamma$ under this morphism.
      Since the very ample line bundle $H_X$ remains of type $(1,1)$ in the family $\mc{X}$, we can therefore conclude that
        $\gamma \in H^{1,1}(X)$ remains of type $(1,1)$ if and only if $\gamma_{\prim}$ remains of type $(1,1)$. In particular,
      $\NL(\gamma)=\NL(\gamma_{\prim})$.
     \end{para}
     
\begin{defi} 
We now discuss the tangent space to the Hodge locus, $\NL(\gamma)$.
We know that the tangent space to $U$ at $X$, $T_XU$ is isomorphic to $H^0(\N_{X|\p3})$.
This is because $U$ is an open subscheme of the Hilbert scheme $H_{Q_d}$, the tangent space of which at the point $X$ is simply
$H^0(\N_{X|\p3})$. 
Given the variation of Hodge structure above, we have (by Griffith's transversality) the differential map:
\[\overline{\nabla}:H^{1,1}(X) \to \mathrm{Hom}(T_XU,H^2(X,\mathcal{O}_X))\]induced by the Gauss-Manin connection.
Given $\gamma \in H^{1,1}(X)$ this induces a morphism, denoted $\overline{\nabla}(\gamma)$ from $T_XU$ to $H^2(\mo_X)$.
The tangent space at $X$ to $\NL(\gamma)$ is then defined to be $\ker(\overline{\nabla}(\gamma))$.
\end{defi}

\begin{para}\label{gr0}
 The boundary map \[\rho:H^0(\N_{X|\p3}) \to H^1(\T_X)\] arising from the long exact sequence associated to the 
 short exact sequence:
 \[0 \to \T_X \to \T_{\p3}|_X \to \N_{X|\p3} \to 0\]
 is called the \emph{Kodaira-Spencer} map. The morphism $\overline{\nabla}(\gamma)$ is related to the Kodaira-Spencer map as we will see below.
\end{para}

\begin{para}\label{c9}
 Note that there exists a natural cup product morphism, 
 \[H^1(X,\T_X) \otimes H^{1}(X,\Omega^1_X) \xrightarrow{\bigcup} H^2(X,\mathcal{O}_X).\]
 For $\gamma \in H^1(\Omega_X^1)$ this induces a morphism, denoted $\bigcup \gamma$, from $H^1(\T_X)$ to $H^2(\mo_X)$.
 We then have the following result in Hodge theory (see \cite[Theorem $10.21$]{v4}):
\end{para}

\begin{lem}\label{a3}
The differential map $\overline{\nabla}(\gamma)$ conincides with the following:
\[T_XU\cong H^0(\N_{X|\p3}) \xrightarrow{\rho} H^1(\T_X) \xrightarrow{\bigcup \gamma} H^2(\mo_X).\]
\end{lem}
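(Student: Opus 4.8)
The plan is to split the assertion into two compatibility statements and then combine them. First I would establish the purely Hodge-theoretic fact that, for the family $\pi\colon\mathcal{X}\to U$, the map induced by the Gauss-Manin connection on the graded pieces of the Hodge filtration is cup product with the Kodaira-Spencer class of $\pi$. Second I would identify this intrinsic Kodaira-Spencer map $T_XU\to H^1(\T_X)$ with the connecting homomorphism $\rho$ of \ref{gr0}, using the isomorphism $T_XU\cong H^0(\N_{X|\p3})$ that holds because $U$ is open in $H_{Q_d}$.

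For the first part I would realize $\mathcal{H}$ as relative de Rham cohomology $\mathbb{H}^2(\Omega^\bullet_{\mathcal{X}/U})$, with Hodge filtration $F^p=\mathbb{H}^2(\Omega^{\geq p}_{\mathcal{X}/U})$, so that the fibre of $F^1/F^2$ at $X$ is $H^1(\Omega^1_X)$ and that of $F^0/F^1$ is $H^2(\mo_X)$. Following the Katz-Oda description, the connection is the connecting map of the short exact sequence of complexes
\[0\to \pi^*\Omega^1_U\otimes\Omega^{\bullet-1}_{\mathcal{X}/U}\to \Omega^\bullet_{\mathcal{X}}/F_{\mathrm{KO}}^2\to \Omega^\bullet_{\mathcal{X}/U}\to 0,\]
where $F_{\mathrm{KO}}^\bullet$ is the filtration of $\Omega^\bullet_{\mathcal{X}}$ by powers of $\pi^*\Omega^1_U$. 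On the associated graded of the Hodge filtration the relevant connecting map is governed by the extension class of the degree-$p$ sequence $0\to\pi^*\Omega^1_U\otimes\Omega^{p-1}_{\mathcal{X}/U}\to(\,\cdot\,)\to\Omega^p_{\mathcal{X}/U}\to 0$, which is exactly the Kodaira-Spencer class, living in $\Ext^1(\Omega^p_{\mathcal{X}/U},\pi^*\Omega^1_U\otimes\Omega^{p-1}_{\mathcal{X}/U})$. Composing with the contraction $\T_{\mathcal{X}/U}\otimes\Omega^p_{\mathcal{X}/U}\to\Omega^{p-1}_{\mathcal{X}/U}$, restricting to the fibre over $X$, and taking $p=1$, the induced map becomes $H^1(\T_X)\xrightarrow{\cup\gamma}H^2(\mo_X)$ precomposed with Kodaira-Spencer. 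Since this computation is classical, I would in the end quote \cite[Theorem $10.21$]{v4} for the precise normalization.

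For the second part I would use that over $U$ the universal family is the restriction of the universal family on the Hilbert scheme, so its Kodaira-Spencer map factors through $H^0(\N_{X|\p3})$ under $T_XU\cong H^0(\N_{X|\p3})$. Concretely, a tangent vector at $X$ is a first-order embedded deformation of $X$ in $\p3$, i.e. a global section of $\N_{X|\p3}$, and the class in $H^1(\T_X)$ of the underlying abstract deformation is precisely its image under the connecting homomorphism $\rho$ of the normal-bundle sequence in \ref{gr0}. This is the standard comparison between the Kodaira-Spencer map of a family and the normal-bundle description of embedded first-order deformations, and it matches the two maps on the nose.

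The main obstacle is the verification hidden in the first part: checking that the extension class appearing in the Katz-Oda filtration really is the Kodaira-Spencer class, and that the induced map on hypercohomology is genuinely cup product against it, with no spurious sign or scalar. This is a spectral-sequence bookkeeping that one either carries out in a \v Cech-Dolbeault model of $\nabla$ or defers to the cited statement; once it is settled, the identification in the second part is routine.
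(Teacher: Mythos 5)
Your proposal is correct, but it does substantially more than the paper, which offers no proof at all: the lemma is presented there as a known result, with a bare citation of \cite[Theorem $10.21$]{v4}, which is precisely the statement that the map induced by the Gauss-Manin connection on the graded pieces of the Hodge filtration is cup product with the Kodaira-Spencer class of the family. Your two-step decomposition --- the Katz-Oda realization of $\nabla$ as the connecting map of the filtered absolute de Rham complex, giving cup product against the Kodaira-Spencer class on the associated graded, followed by the identification of the Kodaira-Spencer map of the universal family with the connecting homomorphism $\rho$ of the normal-bundle sequence under $T_XU\cong H^0(\N_{X|\p3})$ --- is exactly the standard argument underlying that citation, together with a comparison the paper leaves implicit. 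Note that your second step is genuinely needed even for the paper's citation to apply: Voisin's theorem is stated in terms of the intrinsic Kodaira-Spencer map $T_XU\to H^1(\T_X)$ of the family, whereas the lemma is phrased through $\rho$ and the Hilbert-scheme description of $T_XU$, so some bridge between the two is required, and your proposal supplies it explicitly where the paper silently elides it. What the paper's route buys is brevity; what yours buys is self-containedness, modulo the sign and normalization bookkeeping you rightly flag and legitimately defer to the same reference.
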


\section{Hodge locus and Hilbert flag schemes}

\begin{para}
 In this section we define what is a semi-regular map. We then briefly study Hodge locus for a family of smooth projective
 surfaces in $\p3$ and show how it is related to certain Hilbert flag schemes. More specifically, we shall study 
 the Hodge locus corresponding to certain effective algebraic cycles which will be semi-regular. For such classes we will see that
 the Hodge locus ``coincides'' with a component of a flag Hilbert scheme. We elaborate on the details in this section.
\end{para}

\subsection{Semi-regularity map and tangent space to Hodge locus}

\begin{para}
We start with the definition of a semi-regular curve.
 Let $X$ be a surface and $C \subset X$, a curve in $X$. Since $X$ is smooth, $C$ is local complete intersection in $X$.
 This gives rise to the short exact sequence:\[0 \to \mo_X(-C) \to \mo_X \to i_*\mo_C \to 0,\]where $i$ is the natural inclusion
 morphism from $C$ into $X$. Note that, $\mo_X(C)$ is locally free $\mo_X$-module, hence flat. Therefore, tensoring this short exact sequence by $\mo_X(C)$
 we get
 \begin{equation}\label{sh1}
 0 \to \mo_X \to \mo_X(C) \to \N_{C|X} \to 0 
                 \end{equation}                       
                 is exact,
 where $\N_{C|X}$ is the normal sheaf
 $\Hc_X(\mo_X(-C),i_*\mo_C)$ which is isomorphic to the sheaf $i_*\mo_C \otimes_{\mo_X} \mo_X(C)$ (see \cite[Ex. II.$5.1$(b)]{R1}). The \emph{semi-regularity map}
 is the morphism \[\pi:H^1(\N_{C|X}) \to H^2(\mo_X)\]which arises from the long exact sequence associated to the short
 exact sequence (\ref{sh1}). We say that $C$ is \emph{semi-regular} if $\pi$ is injective. 
\end{para}



\begin{para}
 The Lefschetz hyperplane section theorem implies that $H^1(\mo_X)=0$. Then, the long 
 exact sequence associated to (\ref{sh1}) contains the following segment:
 \[0 \to H^1(\mo_X(C)) \to H^1(\N_{C|X}) \xrightarrow{\pi} H^2(\mo_X).\]
 So, $H^1(\mo_X(C))=0$ is equivalent to $\pi$ being injective, hence $C$ being semi-regular. We now prove a result that would help
 us determine when a curve is semi-regular.
\end{para}

\begin{lem}\label{hf11}
Let $C$ be a connected reduced curve and $d \ge \deg(C)+4$ then $h^1(\mathcal{O}_X(C))=0$. In particular, $C$ is semi-regular.
\end{lem}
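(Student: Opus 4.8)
The plan is to compute $H^1(\mo_X(C))$ by Serre duality on $X$ and reduce the vanishing to a regularity statement for $C$ inside $\p3$. Since $X$ is a smooth degree $d$ surface in $\p3$, adjunction gives $\omega_X \cong \mo_X(d-4)$, so Serre duality yields $H^1(\mo_X(C))^{\vee} \cong H^1(\mo_X(d-4)\otimes \mo_X(-C))$, and it suffices to prove $H^1(\mo_X(d-4)(-C))=0$. First I would twist the structure sequence $0 \to \mo_X(-C) \to \mo_X \to \mo_C \to 0$ of $C \subset X$ by $\mo_X(d-4)$ and take cohomology. Using the Lefschetz vanishing $H^1(\mo_X)=0$ recorded in the excerpt together with Serre duality, $H^1(\mo_X(d-4)) \cong H^1(\mo_X)^{\vee}=0$, so the long exact sequence identifies $H^1(\mo_X(d-4)(-C))$ with the cokernel of the restriction map $r\colon H^0(\mo_X(d-4)) \to H^0(\mo_C(d-4))$. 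Hence the Lemma is equivalent to the surjectivity of $r$.

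Next I would push this surjectivity down to $\p3$. The sequence $0 \to \mo_{\p3}(-4) \to \mo_{\p3}(d-4) \to \mo_X(d-4) \to 0$ together with $H^1(\mo_{\p3}(-4))=0$ shows that $H^0(\mo_{\p3}(d-4)) \twoheadrightarrow H^0(\mo_X(d-4))$, so $r$ is surjective precisely when the composite $H^0(\mo_{\p3}(d-4)) \to H^0(\mo_C(d-4))$ is surjective. Feeding the ideal sheaf sequence
\[ 0 \to \I_{C|\p3}(d-4) \to \mo_{\p3}(d-4) \to \mo_C(d-4) \to 0 \]
into cohomology and using $H^1(\mo_{\p3}(d-4))=0$, this surjectivity is in turn equivalent to $H^1(\I_{C|\p3}(d-4))=0$. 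Thus the whole statement reduces to this single vanishing, and by hypothesis it is to be checked in the range $d-4 \ge \deg(C)$.

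Finally I would establish $H^1(\I_{C|\p3}(m))=0$ for $m \ge \deg(C)$ via the Castelnuovo--Mumford regularity of reduced connected curves. Slicing by a general hyperplane $H$, the section $\Gamma = C \cap H$ is a reduced set of $\deg(C)$ points in $H \cong \mathbb{P}^2$, and such points impose independent conditions on forms of degree $\ge \deg(C)-1$, so $H^1(\I_{\Gamma|H}(m))=0$ for $m \ge \deg(C)-1$; feeding the restriction sequence $0 \to \I_{C|\p3}(m-1) \to \I_{C|\p3}(m) \to \I_{\Gamma|H}(m) \to 0$ into cohomology shows the multiplication maps $H^1(\I_{C|\p3}(m-1)) \twoheadrightarrow H^1(\I_{C|\p3}(m))$ are surjective for $m \ge \deg(C)-1$, so it is enough to kill the base term $H^1(\I_{C|\p3}(\deg(C)-1))$. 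This base case is exactly the assertion that $\reg(\I_{C|\p3}) \le \deg(C)$: in the nondegenerate case it follows from the bound of Gruson--Lazarsfeld--Peskine (which in $\p3$ even gives $(\deg(C)-1)$-regularity), while a curve contained in a plane is a complete intersection of type $(1,\deg(C))$ and is handled by a direct regularity computation, using $h^0(\mo_C)=1$ (valid as $C$ is reduced and connected). In either case $m=d-4 \ge \deg(C)$ lies well inside the vanishing range, so $H^1(\I_{C|\p3}(d-4))=0$, hence $H^1(\mo_X(C))=0$; semi-regularity of $C$ is then immediate from the injectivity criterion for the semi-regularity map $\pi$ noted just before the Lemma. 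I expect the regularity bound --- specifically making the hyperplane-lifting base case unconditional for reducible or degenerate $C$ --- to be the main obstacle, since the elementary hyperplane induction only propagates the vanishing and does not by itself produce it.
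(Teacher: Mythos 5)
Your reduction is sound and runs essentially parallel to the paper's own proof: both arguments use Serre duality with $\omega_X \cong \mo_X(d-4)$ to convert $h^1(\mo_X(C))=0$ into a vanishing in degree $d-4 \ge \deg(C)$, and both reduce that to the single statement $H^1(\I_{C|\p3}(d-4))=0$. (The paper gets there via the sequence $0 \to \I_X \to \I_C \to \mo_X(-C) \to 0$ and the vanishing of $H^2(\I_X(k))=H^2(\mo_{\p3}(k-d))$, rather than via surjectivity of restriction maps on $H^0$, but the two reductions are interchangeable.) The difference, and the problem, lies in how that key vanishing is obtained.

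The paper quotes it as a known theorem: the Main Theorem of its reference [gi1] (Giaimo's regularity theorem), which says precisely that the ideal sheaf of a \emph{connected reduced} curve of degree $e$ in projective space is $e$-regular, with no irreducibility or nondegeneracy hypothesis. You instead try to prove the regularity bound by hyperplane slicing, and, as you yourself anticipate, the base case is where the argument breaks. Your dichotomy ``nondegenerate (use Gruson--Lazarsfeld--Peskine) or planar (complete intersection)'' is not exhaustive in the relevant sense: the GLP theorem applies to \emph{integral} (reduced and irreducible) nondegenerate curves, whereas the curves in this lemma are merely connected and reduced, hence possibly reducible. A connected chain of three lines spanning $\p3$ is connected, reduced, nondegenerate and reducible: GLP says nothing about it, and it is not planar either. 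This reducible nondegenerate case is exactly the content of Giaimo's theorem, which was proved precisely to extend GLP to connected reduced curves; so the ``main obstacle'' you flag is not a technical loose end but the entire substance of the citation the paper's proof rests on. To repair your argument you would either invoke that theorem at the base case (at which point the hyperplane induction becomes superfluous, since the theorem already gives $H^1(\I_{C|\p3}(m))=0$ for all $m \ge \deg(C)-1$), or supply an independent proof for reducible connected curves, which is substantially harder than everything else in the lemma.
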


\begin{proof}
Since $X$ is a hypersurface in $\p3$ of degree $d$, $\I_X \cong \mo_{\p3}(-d)$.  Consider the short exact sequence:
\[0 \to \I_X \to \I_C \to \mo_X(-C) \to 0.\]
Tensoring this by $\mo_{\p3}(k)$, we get the following terms in the associated long exact sequence:
\[... \to H^1(\I_C(k)) \to H^1(\mo_X(-C)(k)) \to H^2(\I_X(k)) \to ...\]
Now, $H^2(\mo_{\p3}(k-d))=0$ (see \cite[Theorem $5.1$]{R1}) and $\I_{C}$ is $\deg(C)$-regular (see 
\cite[Main Theorem]{gi1}). 
So, $H^1(\I_C(k))=0$ for $k \ge \deg(C)$. This implies $H^1(\mo_X(-C)(k))=0$ for $k \ge \deg(C)$. 
By Serre duality, $0=H^1(\mo_X(-C)(d-4)) \cong H^1(\mo_X(C))$. So, $C$ is semi-regular.
\end{proof}

\begin{para}
Let $X$ be a surface and $C \subset X$ be a curve.
 We now do a computation to show that for $d \ge \deg(C)+4$, $\dim |C|=0$, where $|C|$ is the linear system of $C$ in $X$. 
\end{para}

\begin{lem}\label{a4e}
Let $d \ge 5$ and $C$ be an effective divisor on a smooth degree $d$ surface $X$ of the form $\sum_i a_iC_i$, where $C_i$ are integral curves with $\deg(C)+4 \le d$.
Then, $h^0(\N_{C|X})=0.$ In particular, $\dim |C|=0$, where $|C|$ is the linear system associated to $C$.
\end{lem}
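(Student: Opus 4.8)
The plan is to reduce both assertions to the single computation $h^0(\mo_X(C))=1$, and then to evaluate $h^0(\mo_X(C))$ by Serre duality together with the ideal-sheaf manipulation already used in Lemma \ref{hf11}.

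First I would take global sections in the sequence (\ref{sh1}), namely $0 \to \mo_X \to \mo_X(C) \to \N_{C|X} \to 0$. As $X$ is a smooth (hence connected) surface, $h^0(\mo_X)=1$, and the Lefschetz hyperplane theorem gives $H^1(\mo_X)=0$; therefore
\[ 0 \to \mb{C} \to H^0(\mo_X(C)) \to H^0(\N_{C|X}) \to 0 \]
is exact. This yields $h^0(\N_{C|X}) = h^0(\mo_X(C)) - 1 = \dim|C|$, so both statements of the lemma are equivalent to $h^0(\mo_X(C))=1$.

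Next I pass to the dual side. By adjunction $\omega_X \cong \mo_X(d-4)$, so Serre duality on $X$ gives $h^0(\mo_X(C)) = h^2(\mo_X(-C)(d-4))$. I then reuse the exact sequence $0 \to \I_X \to \I_C \to \mo_X(-C) \to 0$ on $\p3$ from Lemma \ref{hf11}, twisted by $\mo_{\p3}(d-4)$, where $\I_X(d-4)\cong\mo_{\p3}(-4)$. The relevant segment of its long exact sequence is
\[ H^2(\mo_{\p3}(-4)) \to H^2(\I_C(d-4)) \to H^2(\mo_X(-C)(d-4)) \to H^3(\mo_{\p3}(-4)) \to H^3(\I_C(d-4)). \]
The outer $\p3$-terms are standard: $H^2(\mo_{\p3}(-4))=0$ and $H^3(\mo_{\p3}(-4))\cong\mb{C}$ by Serre duality on $\p3$. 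Since $C$ is one-dimensional, the sequence $0 \to \I_C \to \mo_{\p3} \to \mo_C \to 0$ gives $H^3(\I_C(d-4)) \cong H^3(\mo_{\p3}(d-4)) = 0$ (as $d-4>-4$) and $H^2(\I_C(d-4)) \cong H^1(\mo_C(d-4))$. Feeding this in leaves
\[ 0 \to H^1(\mo_C(d-4)) \to H^2(\mo_X(-C)(d-4)) \to \mb{C} \to 0, \]
so that $h^0(\mo_X(C)) = 1 + h^1(\mo_C(d-4))$.

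Thus everything reduces to the vanishing $H^1(\mo_C(d-4))=0$, equivalently $H^2(\I_C(d-4))=0$, and this is the \emph{main obstacle}. For a connected reduced curve it is exactly the $\deg(C)$-regularity of \cite{gi1} invoked in Lemma \ref{hf11}, combined with the hypothesis $\deg(C)\le d-4$. The genuine difficulty is that $C=\sum_i a_iC_i$ need be neither reduced nor connected, so \cite{gi1} does not apply directly. I would remove this by dévissage on $X$: for a decomposition $C=C'+C''$ the structure sequence $0 \to \mo_X(-C')\otimes\mo_{C''} \to \mo_C \to \mo_{C'} \to 0$, twisted by $\mo_X(d-4)$, reduces the vanishing of $H^1(\mo_C(d-4))$ to the analogous vanishings on strictly smaller subdivisors and, ultimately, on the thickened integral components $a_iC_i$. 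On each integral $C_i$ the regularity bound of \cite{gi1} applies, while the total-degree hypothesis $\sum_i a_i\deg(C_i)=\deg(C)\le d-4$ keeps every twist in the vanishing range. Assembling these gives $H^1(\mo_C(d-4))=0$, hence $h^0(\mo_X(C))=1$, and therefore $h^0(\N_{C|X})=0$ and $\dim|C|=0$. The bookkeeping in this reduction—controlling the intersection terms $C'\cdot C''$ and the multiplicities $a_i$ so that each twist stays above the relevant regularity threshold—is where I expect the real work to lie.
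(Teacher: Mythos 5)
Your reductions are correct as far as they go: the structure sequence (\ref{sh1}) together with $H^1(\mo_X)=0$ does show that both assertions are equivalent to $h^0(\mo_X(C))=1$, and your Serre-duality/ideal-sheaf computation correctly converts this into the vanishing $H^1(\mo_C(d-4))=0$. The problem is that you stop exactly where the lemma begins. The vanishing you defer as ``bookkeeping'' is the entire mathematical content of the statement, and the tool you propose for it is the wrong one: the graded pieces of your d\'evissage are the line bundles $\mo_X(-D)|_{C_i}\otimes\mo_{C_i}(d-4)$ for subdivisors $0\le D\le C-C_i$, which are \emph{not} twists of $\mo_{C_i}$ by powers of the hyperplane class, so the Castelnuovo--Mumford regularity of $\I_{C_i}$ from \cite{gi1} says nothing about them. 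What is needed instead is the degree criterion on an integral curve ($H^1$ of a line bundle vanishes once its degree exceeds $2\rho_a(C_i)-2$), and to verify it one must bound $(d-4)\deg(C_i)-D.C_i$ from below. This forces three inputs: (i) adjunction with $K_X\cong\mo_X(d-4)$, giving $C_i^2=2\rho_a(C_i)-2-(d-4)\deg(C_i)$, in particular $C_i^2<0$, which is what controls the summands of $D$ supported on $C_i$ itself; (ii) the genus bound $2\rho_a(C_i)-2\le \deg(C_i)^2-3\deg(C_i)$ for integral space curves; and (iii) the Bezout-type bound $C_i.C_j\le\deg(C_i)\deg(C_j)$ for distinct components. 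Only after these does the hypothesis $\deg(C)+4\le d$ yield $(d-4)\deg(C_i)-D.C_i\ge\deg(C_i)^2>2\rho_a(C_i)-2$. None of this appears in your proposal, and it is not routine bookkeeping; it is the proof.

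For comparison, the paper runs the argument in the opposite order and without the duality detour: it shows directly, by exactly the computation (i)--(iii) above, that $\deg\bigl(\mo_X(C)|_{C_i}\bigr)=C.C_i<0$ for every integral component $C_i$, concludes $h^0(\N_{C|X})=0$, and then reads off $h^0(\mo_X(C))=1$, i.e.\ $\dim|C|=0$, from the structure sequence. So your framework is completable --- the inequality you are missing is true --- but completing it amounts to reproducing the paper's computation, which your write-up does not contain.
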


\begin{proof}
Let $C=\sum_ia_iC_i$ with $C_i$ integral. Then, \[\deg((\mathcal{O}_X(C)|_C \otimes \mo_C)|_{C_i})=a_iC_i^2+\sum_{j \not= i} a_jC_i.C_j.\]
Denote by $e_i:=\deg(C_i)$. 
Using the adjunction formula and the fact that $K_X \cong \mathcal{O}_X(d-4)$, we have that 
\begin{eqnarray*}
\deg((\mathcal{O}_X(C)|_C \otimes \mo_C)|_{C_i})&=&2a_i\rho_a(C_i)-2a_i-(d-4)a_ie_i+\sum_{j \not= i}a_jC_i.C_j\\
&\le&a_i(e_i^2-(d-1)e_i)+\sum_{j \not=i}a_jC_iC_j\\
&\le&a_i(e_i^2-3e_i-e_i\sum_ja_je_j)+\sum_{j \not=i}a_je_ie_j.
\end{eqnarray*}
The first inequality follows from the bound on the genus of a curve in $\mathbb{P}^3$ in terms of its degree (see \cite[Example $6.4.2$]{R1}).
The second inequality follows from the facts that $d \ge \deg(C)+4$ and $C_i.C_j \le e_ie_j$. It then follows directly that $\deg((\mathcal{O}_X(C)|_C \otimes \mo_C)|_{C_i})<0$.
This implies that $h^0(C_i,(\mathcal{O}_X(C)|_C \otimes \mc{O}_C)|_{C_i})=0$ for all $i$. So, 
$h^0(\N_{C|X})=h^0(C,\mathcal{O}_X(C)|_C \otimes \mo_C)=0.$

Since $h^1(\mathcal{O}_X)=0$ (by Lefschetz hyperplane section Theorem) and $h^0(\mathcal{O}_X)=1$, using 
the long exact sequence associated to the short exact sequence 
\begin{equation}\label{eq2}
0 \to \mathcal{O}_X \to \mathcal{O}_X(C) \to \mathcal{O}_X(C)|_C \otimes \mo_C \to 0
\end{equation}
we get that $h^0(\mathcal{O}_X(C))=1$. Since $|C|=\mb{P}(H^0(\mo_X(C)))$, the lemma follows.
\end{proof}





\subsection{Flag Hilbert scheme and Hodge locus}

\begin{para}
 In this section we introduce the basic definitions of flag Hilbert schemes. See \cite[\S $4$]{S1} for further details. We then prove the main result of this section which relates Hodge locus to Hilbert schemes.
\end{para}

\begin{para}
Given an $m$-tuple of polynomials $\mathcal{P}(t)=(P_1(t),P_2(t),...,P_m(t))$, we define the  contravariant functor, called the \emph{Hilbert flag functor} 
relative to $\mathcal{P}(t)$,\[FH_{\mathcal{P}(t)}:(\mbox{schemes}) \to \mbox{sets}\]
\[S \mapsto \{(X_1,X_2,...,X_m)|X_1 \subset X_2 \subset ... \subset \mathbb{P}^3_S\}\]such that the Hilbert polynomial of $X_i$ is $P_i(t)$ and $X_i$ is an
$S$-closed subscheme of $X_{i+1}$. We call such an $m$-\emph{tuple a flag relative to} $\mathcal{P}(t)$.
\end{para}

\begin{para}
 The functor $FH_{\mathcal{P}(t)}$ is representable by a projective scheme, $H_{\mathcal{P}(t)}$ which parametrizes all such flags relative to $\mathcal{P}(t)$. We call this the \emph{Hilbert flag scheme}.
\end{para}

\begin{thm}\label{dim}
 Let $X$ be a surface, $C$ be a semi-regular curve in $X$ and $\gamma \in H^{1,1}(X,\mb{Z})$ be the class of the curve $C$. For any irreducible component $L'$ of $\overline{\NL(\gamma)}$
 (the closure is taken in the Zariski topology on $U_d$)
 there exists an irreducible component $H'$ of $H_{P,Q_d}$ containing the pair $(C,X)$ such that the associated reduced scheme $\pr_2(H')_{\red}$ 
 coincides with $L'_{\red}$, where $\pr_2$ is 
 the second projection map from $H_{P,Q_d}$ to $H_{Q_d}$. Furthermore, if $d \ge \deg(C)+4$ then such $H'$ is uniquely determined by $L'$.
\end{thm}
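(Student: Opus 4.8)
The plan is to compare, at the point $(C,X)$, the image of the differential $d\pr_2$ with the Zariski tangent space $T_X\NL(\gamma)=\ker\overline{\nabla}(\gamma)$, and then to promote this infinitesimal comparison to an equality of germs using semi-regularity. I would begin by describing the tangent space to the flag scheme. Writing $r\colon H^0(\N_{X|\p3})\to H^0(\N_{X|\p3}|_C)$ for the restriction, and using the normal bundle sequence $0\to\N_{C|X}\to\N_{C|\p3}\to\N_{X|\p3}|_C\to 0$ with connecting map $\delta\colon H^0(\N_{X|\p3}|_C)\to H^1(\N_{C|X})$, the space $T_{(C,X)}H_{P,Q_d}$ is the fibre product of $H^0(\N_{C|\p3})$ and $H^0(\N_{X|\p3})$ over $H^0(\N_{X|\p3}|_C)$, with $\pr_2$ sending a pair $(\alpha,\beta)$ to $\beta$. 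By exactness, $\Ima(H^0(\N_{C|\p3})\to H^0(\N_{X|\p3}|_C))=\ker\delta$, so $\Ima(d\pr_2)=r^{-1}(\ker\delta)$, while $\ker(d\pr_2)=H^0(\N_{C|X})$.

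The heart of the argument is the commutative diagram expressing the semi-regularity identity: the composite $H^0(\N_{X|\p3})\xrightarrow{r}H^0(\N_{X|\p3}|_C)\xrightarrow{\delta}H^1(\N_{C|X})\xrightarrow{\pi}H^2(\mo_X)$ coincides with $\overline{\nabla}(\gamma)=(\bigcup\gamma)\circ\rho$ from Lemma \ref{a3}, where $\pi$ is the semi-regularity map. Granting this, and since $C$ is semi-regular so that $\pi$ is injective, I obtain $\ker\overline{\nabla}(\gamma)=\ker(\delta\circ r)=r^{-1}(\ker\delta)=\Ima(d\pr_2)$, whence $d\pr_2\big(T_{(C,X)}H_{P,Q_d}\big)=T_X\NL(\gamma)$. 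Establishing this identity — matching the Gauss--Manin cup product of Lemma \ref{a3} with the connecting map $\delta$ followed by $\pi$ — is the step I expect to be the main obstacle; it is a local computation comparing how the class $\gamma$ varies under the connection with the obstruction to deforming $C$ inside the deforming surface.

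Next I would pass from tangent spaces to germs. Over any irreducible component $H'$ of $H_{P,Q_d}$ through $(C,X)$ the tautological family of curves is flat with locally constant cohomology class, so $\gamma$ remains Hodge along $\pr_2(H')$; hence $\pr_2(H_{P,Q_d})\subseteq\NL(\gamma)$ near $X$. For the reverse inclusion I would use semi-regularity to control obstructions: given a deformation of $X$ along $\NL(\gamma)$, the obstruction to deforming $C$ inside it lies in $H^1(\N_{C|X})$ and maps under $\pi$ to the obstruction for $\gamma$ to stay Hodge, which vanishes; injectivity of $\pi$ forces the obstruction itself to vanish at every order. Thus $\pr_2$ is formally smooth onto $\NL(\gamma)$ at $(C,X)$, so locally $\pr_2(H_{P,Q_d})=\NL(\gamma)$. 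Consequently each component $L'$ of $\ov{\NL(\gamma)}$ through $X$ (and any such component must contain $X=\pr_2(C,X)$) is the closure of $\pr_2(H')$ for some component $H'$ of $H_{P,Q_d}$ containing $(C,X)$, with $\pr_2(H')_{\red}=L'_{\red}$; this gives existence.

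Finally, for uniqueness when $d\ge\deg(C)+4$, Lemma \ref{a4e} gives $h^0(\N_{C|X})=0$, so $\ker(d\pr_2)=0$ and $\pr_2$ is unramified at $(C,X)$. Combined with the formal smoothness just obtained, $\pr_2$ is étale at $(C,X)$, hence a local isomorphism onto $\NL(\gamma)$. It therefore induces a bijection between the branches of $H_{P,Q_d}$ at $(C,X)$ and the components of $\ov{\NL(\gamma)}$ through $X$, so the component $H'$ with $\pr_2(H')_{\red}=L'_{\red}$ is uniquely determined by $L'$. (Without the bound $h^0(\N_{C|X})$ may be positive, $\pr_2$ has positive-dimensional fibres, and several $H'$ can map onto the same $L'$, which is why uniqueness is only claimed in this range.)
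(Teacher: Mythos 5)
Your proposal rests on exactly the same two pillars as the paper's proof, but where the paper uses a citation you attempt to rebuild the machinery. For the existence part the paper gives no argument at all: it invokes \cite[Theorem $7.1$]{b1}, and that theorem is precisely the conjunction of the two steps you yourself single out as the hard ones --- the identity matching $\overline{\nabla}(\gamma)$ (via Lemma \ref{a3}) with the composite $\pi\circ\delta\circ r$, and the compatibility, at every order, of the obstruction to deforming $C$ with the obstruction for $\gamma$ to remain a Hodge class. Your tangent-space description of $T_{(C,X)}H_{P,Q_d}$ as the fibre product of $H^0(\N_{C|\p3})$ and $H^0(\N_{X|\p3})$ over $H^0(\N_{X|\p3}|_C)$, and the resulting identifications $\ker(d\pr_2)=H^0(\N_{C|X})$ and $\Ima(d\pr_2)=r^{-1}(\ker\delta)$, are correct and standard; but the two steps you ``grant'' are the actual content of Bloch's semi-regularity theorem and constitute the bulk of his paper. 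So as a self-contained proof your attempt has a substantial (though honestly acknowledged) hole, whereas citing \cite{b1}, as the paper does, closes it at once; what you gain in exchange is an explanation of \emph{why} the cited theorem gives the statement, which the paper leaves entirely implicit.

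On uniqueness you and the paper use the same input, Lemma \ref{a4e}, i.e.\ $h^0(\N_{C|X})=0$; the paper phrases this as $\dim|C|=0$ and asserts uniqueness in one line, while you convert it into unramifiedness of $\pr_2$ at $(C,X)$ and a bijection of local branches, which is more informative. One correction: ``\'etale'' overstates what you have, since flatness of $\pr_2$ onto any scheme structure on $\NL(\gamma)$ is never established --- and is not needed. Injectivity of $d\pr_2$ makes $\widehat{\mo}_{U_d,X}\to\widehat{\mo}_{H_{P,Q_d},(C,X)}$ surjective, so the germ of $H_{P,Q_d}$ at $(C,X)$ embeds into the germ of $U_d$ at $X$, and together with the germ-level surjectivity onto $\NL(\gamma)$ from the first part this already yields your branch correspondence. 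Be aware, finally, that passing from this local branch bijection to uniqueness of the \emph{global} component $H'$ with $\pr_2(H')_{\red}=L'_{\red}$ needs one more observation (a priori two components $H_1\neq H_2$ through $(C,X)$ could map to branch sets whose closures are the same $L'$; this is excluded by noting that each branch of full dimension is Zariski dense in the irreducible $L'$, so all branches over $L'$ pull back into one component when the germ of $L'$ at $X$ is irreducible). The paper is no more careful here --- its proof quietly assumes $X$ is general in $L'$, a hypothesis absent from the statement --- so your argument is at least as complete as the original on this point.
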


\begin{proof}
The first part of the theorem follows directly from \cite[Theorem $7.1$]{b1}. 

Furthermore, Lemma \ref{a4e} implies that $\dim |C|=0$. So, given an irreducible component, say $L'$ of $\overline{\NL(\gamma)}$ such that $X$ is a general
  element, there exists an unique irreducible component $H'$ of $H_{P,Q_d}$ containing the pair $(C,X)$ such that $\pr_2(H')_{\red}$ coincides with $L'_{\red}$. 
  This proves the rest of the theorem.

  
  \end{proof}


\section{Variation of lattices}
\begin{para}
 In this section we give a formula to compute the dimension of an irreducible component of a Hodge locus (see Proposition \ref{gh12}). 
 This result will be particularly useful to prove the asymptotic case of a Griffiths-Harris conjecture, which we see in the next section.
\end{para}

\begin{para}\label{pam6}
Let $X$ be a surface of degree $d$. 
An \emph{augmented lattice } $\Lambda_X$ \emph{ on } $X$ \emph{ of rank } $r$, is a rank $r$ $\mb{Z}$-submodule $\Lambda_X \subset H^2(X,\mb{Z})$ generated by the class of the very ample line bundle $H_X$ 
(as in \ref{nl1})
and cohomology classes of $r-1$ reduced curves, say $C_1,...,C_{r-1}$ such that
$\Lambda_X$ is saturated in the sense that for all $\lambda \in \Lambda_X, c \in \mb{Q}$ if $c\lambda \in H^2(X,\mb{Z})$ then $c\lambda \in \Lambda_X$. For such $\Lambda_X$, 
we say that $C_i$ for $i=1,...,r-1$ \emph{generate} $\Lambda_X$.
We say that $\Lambda_X$ is \emph{prime} if $C_1,...,C_{r-1}$ are integral.
\end{para}


\begin{para}\label{pam3}
Let $\Lambda_X$ be as in \ref{pam6}. 
We can define \[ \NL(\Lambda_X):=\{G \in U | \overline{\gamma}_G=0, \mbox{ for all } \gamma \in \Lambda_X\}.\]
For a surface $X$ and reduced curve $C \subset X$ such that the cohomology class $[C]$ of $C$ is not a $\mb{Q}$-multiple of $c_1(H_X)$, denote by $\Lambda_X^0$ the rank $2$ $\mb{Z}$-module generated by $[C]$ and 
$c_1(H_X)$, where $c_1$ is the first Chern class map.
Since a very ample line bundle remains of type $(1,1)$ is the family $\mc{X}$, $\NL(\Lambda_X^0)$ coincides with $\NL([C])$. More generally, for a rank $r$ augmented lattice $\Lambda_X$ generated by $C_1,...,C_{r-1}$ we have,  $\NL(\Lambda_X)$ is isomorphic to the fiber product $\NL([C_1]) \times_{H_{Q_d}} ... \times_{H_{Q_d}} \NL([C_{r-1}])$.
\end{para}

\begin{para}\label{pam2}
Let $\Lambda_X$ be as before of rank $2$, generated by a reduced curve, say $C$. 
Let $P$ be the Hilbert polynomial of $C$. Assume $d \ge \deg(C)+4$. Using Theorem \ref{dim} we can conclude that for general $X' \in \NL(\Lambda_X)$ there exists a curve $C' \subset X'$ such that 
$\overline{\NL([C'])}$ is an irreducible component of $\overline{\NL(\Lambda_X)}$ and $C'$ deforms to $C$, i.e., $C'$ has the same Hilbert polynomial $P$. Denote by $\Lambda_{X'}$ the augmented lattice on $X'$ of 
rank $2$ generated by $C'$.
Theorem \ref{dim} again implies that there exists an unique irreducible component, denoted $H_{\Lambda_{X'}}$, of $H_{P,Q_d}$ containing the pair $(C',X')$ such that $\pr_2(H_{\Lambda_{X'}})$ is isomorphic to $\overline{\NL(\Lambda_{X'})}$.
Denote by $L_{\Lambda_{X'}}:=\pr_1(H_{\Lambda_{X'}})$. From now on, we will always assume that $\overline{\NL(\Lambda_X)}$ is irreducible, which is equivalent to $X$ 
being general in $\NL(\Lambda_X)$, in particular,
away from the points of intersection of any two irreducible components of $\overline{\NL(\Lambda_X)}$. 
\end{para}

\begin{para}
Suppose now that $\Lambda_X$ is of rank $r$ generated by $C_1,...,C_{r-1}$. Let $P_1,...,P_{r-1}$ be the Hilbert polynomials of $C_1,...,C_{r-1}$, respectively. Consider the natural morphism  
 \[p:H_{P_1,Q_d} \times_{H_{Q_d}} ... \times_{H_{Q_d}} H_{P_{r-1},Q_d} \to H_{Q_d}.\] Assume $d \ge \sum_{i=1}^{r-1}\deg(C_i)+4$. Using Theorem \ref{dim}, we can conclude that for every irreducible component $L'$
 of $\overline{\NL(\Lambda_X)}$ there exists an unique irreducible component, say $H$ of $H_{P_1,Q_d} \times_{H_{Q_d}} ... \times_{H_{Q_d}} H_{P_{r-1},Q_d}$ containing $(C_1,X) \times ... \times (C_{r-1},X)$ 
 such that $p(H)$ coincides with $L'$. Similarly as in \ref{pam2}, by taking $X$ general in $\NL(\Lambda_X)$ we can ensure that $\overline{\NL(\Lambda_X)}$ is irreducible. 
 Denote by $H_{\Lambda_X}$ the irreducible component of $H_{P_1,Q_d} \times_{H_{Q_d}} ... \times_{H_{Q_d}} H_{P_r,Q_d}$ such that $p(H_{\Lambda_X})$ coincides with $\overline{\NL(\Lambda_X)}$. 
 Denote by $L_{\Lambda_X}:=\pr(H_{\Lambda_X})$, where $\pr$ is  the natural projection map from $H_{P_1,Q_d} \times_{H_{Q_d}} ... \times_{H_{Q_d}} H_{P_{r-1},Q_d}$ to $H_{P_1} \times ... \times H_{P_{r-1}}$.
\end{para}

\begin{prop}\label{gh12}
Let $r \ge 3$, $X$ be a surface of degree $d$ and $\Lambda_X$ be an augmented lattice of rank $r+1$ generated by $r$ reduced curves $C_1,...,C_r$. Assume that $\sum_{i=1}^r \deg(C_i)+4 \le d$.
Then, the dimension of $\ov{\NL(\Lambda_X)}$ is given by the following formula: \[\codim \ov{\NL(\Lambda_X)}=\codim I_d(C)-\dim L_{\Lambda_X}\] where $C=C_1' \bigcup ... \bigcup C_r'$ for a  general $r$-tuple $(C_1',...,C_r')$ in $L_{\Lambda_X}$
and $I_d(C)$ is the degree $d$ graded piece in the ideal $I(C)$ of $C$.
\end{prop}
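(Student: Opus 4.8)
The plan is to compute $\dim\ov{\NL(\Lambda_X)}$ by computing the dimension of the component $H_{\Lambda_X}\subseteq H_{P_1,Q_d}\times_{H_{Q_d}}\cdots\times_{H_{Q_d}}H_{P_r,Q_d}$ in two ways, using its two projections, and then passing to codimension inside $U_d\subseteq\mb{P}(H^0(\p3,\op))$. Recall from the construction preceding the statement that $H_{\Lambda_X}$ is characterised by $p(H_{\Lambda_X})=\ov{\NL(\Lambda_X)}$ and $\pr(H_{\Lambda_X})=L_{\Lambda_X}$, where $p$ is the projection to $H_{Q_d}$ and $\pr$ the projection to $H_{P_1}\times\cdots\times H_{P_r}$. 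Throughout, $\codim I_d(C)=h^0(\op)-\dim I_d(C)$ denotes the codimension of the linear subspace $I_d(C)\subseteq H^0(\p3,\op)$, where $h^0(\op)=\dim H^0(\p3,\op)$.

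First I would analyse $p\colon H_{\Lambda_X}\to\ov{\NL(\Lambda_X)}$. A point of the fibre over a surface $X'$ is an $r$-tuple $(C_1',\ldots,C_r')$ of curves in $X'$ with Hilbert polynomials $P_1,\ldots,P_r$. For a general $X'$, which is smooth, each $C_i'$ has $\deg(C_i')=\deg(C_i)$ and hence $\deg(C_i')\le\sum_j\deg(C_j')\le d-4$, so Lemma \ref{a4e} applies to each $C_i'$ and gives $h^0(\N_{C_i'|X'})=0$; thus each $C_i'$ is rigid in $X'$, there are only finitely many such curves, and the fibre of $p$ is finite. Since $p(H_{\Lambda_X})=\ov{\NL(\Lambda_X)}$, the map $p$ is generically finite onto its image and $\dim H_{\Lambda_X}=\dim\ov{\NL(\Lambda_X)}$.

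Next I would analyse $\pr\colon H_{\Lambda_X}\to L_{\Lambda_X}$. Computed in the full fiber product, the fibre of $\pr$ over a tuple $t=(C_1',\ldots,C_r')$ is the set of degree $d$ surfaces $X'$ with $C_i'\subseteq X'$ for every $i$; equivalently the defining form of $X'$ lies in $\bigcap_iI_d(C_i')=I_d(C)$, where $C=C_1'\cup\cdots\cup C_r'$. Hence this fibre is the irreducible projective space $\mb{P}(I_d(C))$, of dimension $\dim I_d(C)-1$. To see that the generic fibre of $\pr$ restricted to the single component $H_{\Lambda_X}$ is this entire linear system, I would take a general point $\xi$ of $H_{\Lambda_X}$: being the general point of an irreducible component, $\xi$ lies on no other component of the fiber product, so it has an open neighbourhood contained in $H_{\Lambda_X}$; the fibre $\pr^{-1}(\pr(\xi))=\mb{P}(I_d(C))$ passes through $\xi$ and is irreducible, hence meets this neighbourhood and is therefore contained in $H_{\Lambda_X}$. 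This gives $\dim H_{\Lambda_X}=\dim L_{\Lambda_X}+\dim I_d(C)-1$, with $C$ the union of a general $r$-tuple of $L_{\Lambda_X}$, as in the statement.

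Combining the two computations yields $\dim\ov{\NL(\Lambda_X)}=\dim L_{\Lambda_X}+\dim I_d(C)-1$. Since $U_d$ is open in $\mb{P}(H^0(\p3,\op))$ we have $\dim U_d=h^0(\op)-1$, and therefore
\[\codim\ov{\NL(\Lambda_X)}=(h^0(\op)-1)-(\dim L_{\Lambda_X}+\dim I_d(C)-1)=h^0(\op)-\dim I_d(C)-\dim L_{\Lambda_X}=\codim I_d(C)-\dim L_{\Lambda_X},\]
which is the desired formula. I expect the main obstacle to be the $\pr$-side computation: a priori $H_{\Lambda_X}$ is only one of possibly several components of the fiber product lying over $L_{\Lambda_X}$, and one must exclude the possibility that it meets the generic fibre in a proper subfamily of the surfaces through $C$. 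The irreducibility of the linear-system fibre $\mb{P}(I_d(C))$, together with the elementary fact that a general point of a component avoids all the other components, is exactly what forces the generic fibre of $\pr|_{H_{\Lambda_X}}$ to be the full $\mb{P}(I_d(C))$; the remaining points (finiteness of the $p$-fibres and the persistence of the degree bound $\sum_i\deg(C_i)+4\le d$ for a general tuple) are then routine consequences of Lemma \ref{a4e}.
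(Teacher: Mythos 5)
Your proposal is correct and follows essentially the same route as the paper: both compute $\dim H_{\Lambda_X}$ via its two projections, using Lemma \ref{a4e} to get zero-dimensional (finite) fibres over $\ov{\NL(\Lambda_X)}$ and the irreducibility of $\mb{P}(I_d(C))$ to get generic fibres of dimension $\dim I_d(C)-1$ over $L_{\Lambda_X}$, then converting to codimension in $\mb{P}(H^0(\p3,\op))$. Your argument that the generic fibre of $\pr$ restricted to the single component $H_{\Lambda_X}$ is the whole linear system (a general point avoids all other components of the fibre product) is in fact a more explicit justification of the step the paper dispatches by simply invoking the irreducibility of $I_d(C)$.
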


\begin{proof}
Consider the diagram,
\[\begin{diagram}
H_{\Lambda_X}&\rTo^{\pr_1}&L_{\Lambda_X}\\
\dTo^{\pr_2}& \\
\ov{\NL(\Lambda_X)}\\
\end{diagram}\]
Denote by $P_i$ the Hilbert polynomial of curves $C_i$, respectively. Recall, $L_{\Lambda_X}$ is contained in $H_{P_1} \times ... \times H_{P_r}$. 
For an $r$-tuple $(C_1,...,C_r) \in L_{\Lambda_X}$, the fiber of $\pr_1$ parametrizes the space of smooth degree $d$ surfaces containing $C=C_1 \bigcup ... \bigcup C_r$, which is an open subscheme in $\mb{P}(I_d(C))$.
Since $I_d(C)$ is irreducible, the dimension of the generic fiber of $\pr_1$ is equal to $\dim I_d(C)-1$, where $(C_1,...,C_r) \in L_{\Lambda_X}$ is a general element.
The fiber of $\pr_2$ over $\pr_2((C_1,...,C_r,X))$ is isomorphic to $|C_1|\times ... \times |C_r|$. But, Lemma \ref{a4e} implies $\dim |C_i|=0$ for $i=1,...,r$.
So, the dimension of the generic fiber of $\pr_2$ is zero.
Then, \[\dim H_{\Lambda_X}=\dim L_{\Lambda_X}+\dim I_d(C)-1=\dim \NL(\Lambda_X).\]
\[\mbox{So, } \codim \overline{\NL(\Lambda_X)}=\dim \mb{P}(H^0(\mo_{\p3}(d)))-\dim \overline{\NL(\Lambda_X)}=h^0(\mo_{\p3}(d))-\dim I_d(C)-\dim L_{\Lambda_X}.\]
This finishes the proof of the proposition.
\end{proof}

\section{A Griffiths-Harris conjecture}

\begin{para}
We now come to the final section of the article, where we prove an asymptotic case of a Griffiths-Harris conjecture. Recall,
a \emph{Griffiths-Harris conjecture} states in \cite{GH} that: 
\begin{center}
For $3 \le r \le d$, the codimension of an irreducible component of $\NL_{r,d}$ is at least equal to \[(r-1)(d-3)-\binom{r-3}{2}.\]
Furthermore, there exists a component of $\NL_{r,d}$ of this codimension parametrizing the space of surfaces 
containing $r-1$ lines on the same plane. 
\end{center}
\end{para}

\begin{note}
We will denote by $\N_d(r)$ the number, \[(d-3)(r-1)-\binom{r-3}{2}.\]We now recall a result in Noether-Lefschetz locus due to Otwinowska which will help us characterize the irreducible components of $\NL_{r,d}$ with codimension
less than or equal to $\N_d(r)$.
\end{note}

\begin{thm}[{\cite[Theorem $1$]{ot}}]\label{gh22}
 Let $\gamma$ be an augmented lattice of rank $2$ on a degree $d$ surface. There exists $C \in \mathbb{R}_+^*$ depending only on $r$ such that for $d \ge C(r-1)^8$ if $\codim \NL(\gamma) \le (r-1)d$ 
 then $\gamma_{\prim}=\sum_{i=1}^ta_i[C_i]_{\prim}$, where $a_i \in \mathbb{Q}^*$, $C_i$ are reduced curves and $\deg(C_i) \le (r-1)$ for
 $i=1,...,t$ for some positive integer $t$.
\end{thm}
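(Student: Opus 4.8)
The plan is to translate the codimension bound into a statement about a single multiplication map in the Jacobian ring, and then to extract the geometry of low-degree curves from the resulting algebra. Write $F$ for the defining equation of $X$, set $S=\mb{C}[x_0,\dots,x_3]$, let $J=(\partial_0F,\dots,\partial_3F)$ be the Jacobian ideal and $R=S/J$ the Jacobian ring, an Artinian Gorenstein algebra with socle in degree $4d-8$. By Griffiths' description of the primitive cohomology of a hypersurface one has canonical isomorphisms $H^{2,0}(X)_{\prim}\cong R_{d-4}$, $H^{1,1}(X)_{\prim}\cong R_{2d-4}$ and $H^{0,2}(X)_{\prim}\cong R_{3d-4}$. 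Since $\gamma$ is generated by $H_X$ and a reduced curve $C$, and $\NL(\gamma)=\NL(\gamma_{\prim})$ because the very ample class stays of type $(1,1)$, I first replace $\gamma$ by its primitive generator class $\gamma_{\prim}=[C]_{\prim}\in R_{2d-4}$. Combining Lemma \ref{a3} with Griffiths' theory, the differential $\overline{\nabla}(\gamma_{\prim})$ is identified, up to the surjection $T_XU_d\cong H^0(\N_{X|\p3})\twoheadrightarrow R_d$, with the multiplication map $\mu_\gamma\colon R_d\to R_{3d-4}$, $v\mapsto\gamma_{\prim}\cdot v$. Taking $X$ general so that $T_X\NL(\gamma)=\ker\overline{\nabla}(\gamma_{\prim})$, the inequality $\dim\NL(\gamma)\le\dim\ker\overline{\nabla}(\gamma_{\prim})$ yields $\mr{rank}\,\mu_\gamma\le\codim\NL(\gamma)\le(r-1)d$. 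The whole problem thereby reduces to the commutative-algebra implication
\[
\dim\bigl(\gamma_{\prim}\cdot R_d\bigr)\le(r-1)d\ \Longrightarrow\ \gamma_{\prim}=\textstyle\sum_i a_i[C_i]_{\prim},\ a_i\in\mb{Q}^*,\ \deg C_i\le r-1.
\]

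Next I would dualize. By Gorenstein duality the socle pairing $R_{3d-4}\times R_{d-4}\to R_{4d-8}\cong\mb{C}$ is perfect, and the adjunction $\langle\gamma_{\prim}v,P\rangle=\langle v,\gamma_{\prim}P\rangle$ identifies the orthogonal complement of the image $\gamma_{\prim}R_d$ with $\mr{Ann}(\gamma_{\prim})_{d-4}=\{P\in R_{d-4}\mid P\gamma_{\prim}=0\}$; hence $\dim(\gamma_{\prim}R_d)=\binom{d-1}{3}-\dim\mr{Ann}(\gamma_{\prim})_{d-4}$. So the hypothesis says exactly that $\gamma_{\prim}$ is annihilated by a codimension-$\le(r-1)d$ space of degree-$(d-4)$ forms, i.e. its apolar ideal is very large in low degree. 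The task is to convert this into geometry. Here I would feed in the already-available input of \cite{A1}: since $\codim\NL(\gamma)\le(r-1)d$, the general surface in the component contains a reduced curve of degree $\le r-1$. The main step is to upgrade this single curve to a full description, showing that the saturation of $\mr{Ann}(\gamma_{\prim})$ is the homogeneous ideal of a union of reduced curves $C_1,\dots,C_t$ of degree $\le r-1$, and that $\gamma_{\prim}$ is recovered as a rational combination of their primitive classes.

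The mechanism for this step should be quantitative Hilbert-function control. Because $\gamma_{\prim}R_d$ grows slowly, Macaulay's bound together with Gotzmann persistence, Green's hyperplane-restriction theorem, and the symmetrizer lemma should force the annihilator ideal to be generated in degree $\le r-1$ and to cut out a one-dimensional scheme of small degree; translating back through the Griffiths residue description, each irreducible piece contributes a term $a_i[C_i]_{\prim}$ with $C_i$ reduced of degree $\le r-1$. Integrality of $\gamma$ and of the curve classes, together with the fact that the curve classes span a $\mb{Q}$-subspace of $R_{2d-4}$, then gives $a_i\in\mb{Q}^*$. I expect the argument to be inductive, peeling off one curve class at a time and reapplying the rank estimate to $\gamma_{\prim}-a_1[C_1]_{\prim}$, with the hypothesis $d\ge C(r-1)^8$ providing precisely the cumulative room needed for each iterated Macaulay/Gotzmann estimate to remain valid.

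The main obstacle is exactly this middle step: passing from ``$\mu_\gamma$ has rank $\le(r-1)d$'' to ``$\mr{Ann}(\gamma_{\prim})$ defines reduced curves of degree $\le r-1$''. The slow-growth hypothesis only constrains two graded pieces of the ideal, whereas one must propagate that control across a wide band of degrees — this is where the persistence-type theorems and the polynomial bound on $d$ are forced upon us — and one must also rule out embedded or fat structure so that the scheme produced is genuinely reduced and of pure dimension one. Matching the algebraically produced ideal with honest reduced curve classes, and verifying that $\gamma_{\prim}$ lands in their rational span rather than in some strictly larger apolar space, is the delicate heart of the proof.
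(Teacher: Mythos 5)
First, a structural point: the paper does not prove Theorem \ref{gh22} at all. It is imported verbatim as an external result, namely \cite[Theorem 1]{ot}, and every use of it in the paper treats it as a black box. So there is no internal proof to compare your attempt against; what you have done is attempt to reprove Otwinowska's theorem itself. Your framework is, as far as it goes, the correct one and is in fact the framework of \cite{ot}: Griffiths' identification of the primitive Hodge pieces with graded pieces $R_{d-4}$, $R_{2d-4}$, $R_{3d-4}$ of the Jacobian ring, the identification (via Lemma \ref{a3} and the Carlson--Griffiths description) of $\overline{\nabla}(\gamma_{\prim})$ with the multiplication map $R_d\to R_{3d-4}$, the inequality $\mathrm{rank}\,\mu_\gamma\le\codim\NL(\gamma)$, and Gorenstein duality converting the hypothesis into a lower bound on $\dim\mr{Ann}(\gamma_{\prim})_{d-4}$. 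All of that is sound.

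The genuine gap is that the entire mathematical content of the theorem is the implication you yourself flag as ``the main obstacle'' and then do not prove: passing from a rank bound on a single multiplication map to the conclusion that $\gamma_{\prim}$ lies in the rational span of classes of reduced curves of degree at most $r-1$. Listing Macaulay's bound, Gotzmann persistence, Green's restriction theorem and the symmetrizer lemma names the right toolbox, but orchestrating them --- propagating control from one graded piece of the annihilator across a band of degrees, showing the resulting ideal cuts out a reduced pure one-dimensional scheme of degree at most $r-1$, recovering $\gamma_{\prim}$ from it, iterating the peeling-off argument, and tracking the constant that produces the bound $d\ge C(r-1)^8$ --- is precisely the body of Otwinowska's paper, not a routine verification. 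Moreover, your proposed shortcut of feeding in \cite{A1} as ``already-available input'' fails on two counts: that result only produces \emph{some} low-degree curve on the general surface of the component, which does not place $\gamma_{\prim}$ itself in the span of curve classes (this is exactly what must be proven); and it is methodologically circular, since \cite{A1} and \cite{ot} are companion results established by the same Hilbert-function machinery you are trying to invoke. As it stands, your text is a plausible roadmap of the known proof, not a proof.
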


\begin{para}
Throughout this section we denote by $r$ an integer greater than or equal to $3$ and for a fixed $r$, denote by $d$, an integer as mentioned in Theorem \ref{gh22}. We will \emph{assume} 
that $d$ is at least $r^3$ which will be used only in a computation in Lemma \ref{gh4}. The other results do not have any restriction on $d$ in terms of $r$.
\end{para}

\begin{prop}\label{c3}
Let $L$ be an irreducible component of $\NL_{r,d}$. Then $L$ is locally homeomorphic to $\NL(\Lambda)$ for some prime augmented lattice $\Lambda$ of 
rank at least $r$ on a surface $X \in L$, general.
\end{prop}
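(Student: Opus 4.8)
The plan is to take $X$ to be a \emph{very general} point of $L$ and to set $\Lambda := \NS(X) = H^{1,1}(X,\mb{Z})$, the full lattice of integral Hodge classes on $X$, and then to verify that this $\Lambda$ is a prime augmented lattice of the asserted rank and that $L$ and $\NL(\Lambda)$ have the same germ at $X$. Since $X \in L \subseteq \NL_{r,d}$ we have $\rho(X) \ge r$, so $\Lambda$ has rank $\rho(X) \ge r$; moreover $\NS(X)$ is saturated in $H^2(X,\mb{Z})$ by construction (it is the intersection of $H^2(X,\mb{Z})$ with a subspace) and it contains the very ample class $H_X$. Thus the lattice-theoretic requirements of \ref{pam6} will hold automatically once suitable generators are exhibited.

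To identify the germs at $X$ I would prove the two inclusions separately. For $\NL(\Lambda) \subseteq L$: if $G \in \NL(\Lambda)$ then every class of $\Lambda$ stays Hodge at $G$, so $\rho(X_G) \ge \rk \Lambda \ge r$ and hence $\NL(\Lambda) \subseteq \NL_{r,d}$; since $X$ is very general in $L$ it avoids the countably many proper intersections $L \cap L''$ with the other components $L''$ of $\NL_{r,d}$, so near $X$ one has $\NL_{r,d} = L$ and therefore $\NL(\Lambda) \subseteq L$ as germs. The reverse inclusion $L \subseteq \NL(\Lambda)$ is the crux, and here I would use the countability of Hodge loci. Working on a simply connected neighbourhood $U$ of $X$, each $\gamma \in H^2(X,\mb{Z})$ gives a flat section, and either $L \subseteq \NL(\gamma)$ or $L \cap \NL(\gamma)$ is a proper closed analytic subset of $L$. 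Choosing $X$ outside the countable union of all such proper intersections (together with the loci used above), every $\gamma \in \NS(X)$ — that is, every $\gamma$ with $X \in \NL(\gamma)$ — must satisfy $L \subseteq \NL(\gamma)$, whence $L \subseteq \bigcap_{\gamma \in \Lambda} \NL(\gamma) = \NL(\Lambda)$. Combining the two inclusions gives $L = \NL(\Lambda)$ as germs at $X$, which is the asserted local homeomorphism.

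It remains to realise $\Lambda$ as \emph{prime}, i.e.\ to exhibit integral curves generating it. Since $X$ is a smooth projective surface with ample class $H_X$, for any divisor class $D$ and $n \gg 0$ both $D + nH_X$ and $nH_X$ are very ample, so by Bertini (in characteristic $0$) their general members are smooth, and ampleness forces them to be connected, hence integral; thus classes of integral curves span $\NS(X) \otimes \mb{Q}$. Choosing integral curves $C_1,\dots,C_{\rho(X)-1}$ whose classes together with $H_X$ form a $\mb{Q}$-basis of $\NS(X)\otimes\mb{Q}$, their $\mb{Z}$-span is a full-rank subgroup of the saturated lattice $\NS(X)$, so its saturation is exactly $\Lambda = \NS(X)$. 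This presents $\Lambda$ as the prime augmented lattice generated by $H_X$ and $C_1,\dots,C_{\rho(X)-1}$, of rank $\rho(X) \ge r$. I expect the main obstacle to be the crux inclusion above, namely the justification that for very general $X \in L$ the \emph{entire} lattice $\NS(X)$ consists of classes remaining Hodge along $L$ — equivalently, that the generic N\'eron--Severi group is locally constant on $L$. This is precisely where the very-general choice of $X$ and the countability of the loci $\NL(\gamma)$ enter, and it is the only non-formal input in the argument.
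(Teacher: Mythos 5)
Your proposal is correct and takes essentially the same approach as the paper: the paper's (very terse) proof likewise takes $X$ general in $L$, sets $\Lambda := \NS(X)$, asserts that $\NL(\Lambda)_{\red}$ is an open subscheme of $L_{\red}$, that $\Lambda$ may be assumed prime, and that its rank is at least $r$. You have simply filled in the details the paper leaves implicit, namely the countability-of-Hodge-loci argument identifying the germs of $L$ and $\NL(\Lambda)$ at a very general point, and the Bertini-type argument producing integral curves that generate $\NS(X)$ up to saturation.
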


\begin{proof}
Let $L \subset \NL_{r,d}$ be an irreducible component. Let $X$ be a general element in $L$. This implies that for the Picard lattice $\Lambda:=\NS(X)$,  $\NL(\Lambda)_{\red}$ is an open subscheme of $L_{\red}$.
We can assume that $\Lambda$ is a prime lattice. Since $X$ is an element in $\NL_{r,d}$, the rank of $\Lambda$ is greater than or equal to $r$.
\end{proof}

\begin{prop}\label{gh14}
If $\Lambda$ is an augmented prime lattice of rank $t$ on some degree $d$ surface and $\codim \overline{\NL(\Lambda)} \le (r-1)d$. 
Then there exists a prime lattice $\Lambda'$ of rank greater than or equal to $t$ generated by classes of curves of degree less than or equal to $r-1$,
such that $C_i$ deforms along $\NL(\Lambda')$ and
$\overline{\NL(\Lambda)}_{\red}=\overline{\NL(\Lambda')}_{\red}$.
\end{prop}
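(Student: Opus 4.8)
The plan is to reduce the rank-$t$ situation to Otwinowska's rank-$2$ theorem (Theorem~\ref{gh22}) applied to each generator, and then to pin down the new lattice by sandwiching it between $\Lambda$ and the generic N\'eron--Severi lattice of the relevant component. First I would fix an irreducible component $L$ of $\ov{\NL(\Lambda)}$ and, as in \ref{pam2}, pass to a general $X \in L$, so that $\ov{\NL(\Lambda)}$ is irreducible near $X$ and $\NS(X)=:\Lambda_{\mr{gen}}$ is the generic N\'eron--Severi lattice of $L$, with $L=\ov{\NL(\Lambda_{\mr{gen}})}$ locally and $\Lambda \subseteq \Lambda_{\mr{gen}}$. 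Writing $\Lambda$ as generated by $H_X$ and integral curves $C_1,\dots,C_{t-1}$, I note $\NL(\Lambda)=\bigcap_i \NL([C_i])$ by \ref{pam3}, so $\NL(\Lambda)\subseteq \NL([C_i])$ forces $\codim \NL([C_i]) \le \codim \ov{\NL(\Lambda)} \le (r-1)d$. Since each $[C_i]$ is a Hodge class on the general $X$, it is represented by a curve on $X$ by the Lefschetz $(1,1)$-theorem, and applying Theorem~\ref{gh22} to the rank-$2$ augmented lattice generated by $H_X$ and $[C_i]$ yields $[C_i]_{\prim}=\sum_j a_{ij}[D_{ij}]_{\prim}$ with $a_{ij}\in\mb{Q}^*$ and $D_{ij}$ reduced curves on $X$ of degree $\le r-1$.

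Next I would replace each $D_{ij}$ by its integral components, which still have degree $\le r-1$, and let $\Lambda'$ be the saturation in $H^2(X,\mb{Z})$ of the subgroup generated by $H_X$ together with all of these integral curves. By construction $\Lambda'$ is a prime augmented lattice generated by integral curves of degree $\le r-1$. Because each $[C_i]_{\prim}$ lies in the $\mb{Q}$-span of the $[D_{ij}]_{\prim}$ and $H_X \in \Lambda'$, we obtain $\Lambda\otimes\mb{Q}\subseteq\Lambda'\otimes\mb{Q}$, and hence $\rk \Lambda' \ge \rk \Lambda = t$, which gives the rank bound.

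The heart of the argument is the equality of Hodge loci, which I would establish by a sandwich. The inclusion $\ov{\NL(\Lambda')}\subseteq\ov{\NL(\Lambda)}$ follows from $\Lambda\otimes\mb{Q}\subseteq\Lambda'\otimes\mb{Q}$: on any surface where every class of $\Lambda'$ remains of type $(1,1)$, so does every $\mb{Q}$-combination, in particular every class of $\Lambda$. For the reverse inclusion the crucial point is that all the curves $D_{ij}$ genuinely lie on $X$, so their classes lie in $\NS(X)=\Lambda_{\mr{gen}}$; since $\Lambda'$ is the saturation of a subgroup of the saturated lattice $\Lambda_{\mr{gen}}$, we get $\Lambda'\subseteq\Lambda_{\mr{gen}}$, whence $\NL(\Lambda_{\mr{gen}})\subseteq\NL(\Lambda')$ and therefore $L=\ov{\NL(\Lambda_{\mr{gen}})}\subseteq\ov{\NL(\Lambda')}$ near $X$. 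Combining the two, $L\subseteq\ov{\NL(\Lambda')}\subseteq\ov{\NL(\Lambda)}=L$ locally, which yields $\ov{\NL(\Lambda)}_{\red}=\ov{\NL(\Lambda')}_{\red}$.

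It remains to record that the generating curves of $\Lambda'$ deform along $\NL(\Lambda')$: each is integral of degree $\le r-1\le d-4$, hence connected, reduced, and semi-regular by Lemma~\ref{hf11}; since $L=\ov{\NL(\Lambda')}$ is irreducible through $X$ and lies in a component of $\ov{\NL([D_{ij}])}$, Theorem~\ref{dim} shows $D_{ij}$ deforms along $L=\NL(\Lambda')$. The step I expect to be the main obstacle is the reverse inclusion, and specifically the geometric inputs it relies on: that the low-degree curves produced by Otwinowska actually live on the general member $X$, and that $\NS(X)$ is exactly the generic lattice $\Lambda_{\mr{gen}}$ with $L=\ov{\NL(\Lambda_{\mr{gen}})}$. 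Once these facts about the general surface in $L$ are in hand, the containment $\Lambda'\subseteq\Lambda_{\mr{gen}}$ and the sandwich become formal; the delicate part is justifying the passage to the general member together with the realization of the abstract Hodge classes $[C_i]$ by curves there.
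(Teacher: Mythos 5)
Your proposal is correct (at the same level of rigor as the paper) but it constructs $\Lambda'$ by a genuinely different route. The paper's proof takes $\Lambda'$ to be the \emph{maximal} lattice remaining of type $(1,1)$ along $\NL(\Lambda)$ --- concretely the N\'{e}ron--Severi group $\NS(X')$ of a very general member --- so that the equality $\ov{\NL(\Lambda)}_{\red}=\ov{\NL(\Lambda')}_{\red}$ comes essentially for free from maximality together with $\Lambda\subseteq\Lambda'$; it then applies Otwinowska's Theorem~\ref{gh22} to \emph{every} class $\gamma\in\Lambda'$ to conclude that this maximal lattice is generated by $H_X$ and curves of degree at most $r-1$. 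You instead apply Theorem~\ref{gh22} only to the $t-1$ generators of $\Lambda$ (transported to the general member $X$), define $\Lambda'$ as the saturation of the span of $H_X$ and the resulting integral curves, and then must establish the equality of loci separately, via the sandwich $L\subseteq\ov{\NL(\Lambda')}\subseteq\ov{\NL(\Lambda)}$, whose nontrivial inclusion rests on $\Lambda'\subseteq\NS(X)=\Lambda_{\mr{gen}}$ --- i.e., on the same identification of the generic Picard lattice that the paper uses to define its $\Lambda'$ in the first place. The deformation step is identical in substance: Lemma~\ref{hf11} gives semi-regularity, and Bloch's semi-regularity theorem (which you access through Theorem~\ref{dim}, while the paper cites it directly) gives effectivity along the locus. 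Your route is more economical with Otwinowska's theorem (finitely many applications, and your $\Lambda'$ is only as large as needed) and makes explicit the two containments the paper compresses into ``maximal, i.e.''; the paper's route is shorter and produces a canonical $\Lambda'$, namely the full generic N\'{e}ron--Severi lattice. Be aware that both arguments share the same implicit conventions: $\ov{\NL(\Lambda)}$ is treated as irreducible (the standing assumption of \ref{pam2}), and Theorem~\ref{gh22}, literally stated for rank-two lattices generated by $H_X$ and the class of a reduced curve, is invoked for rank-two lattices generated by $H_X$ and an arbitrary integral $(1,1)$-class (your transported classes $[C_i]$ on the general $X$, and likewise the paper's arbitrary $\gamma\in\NS(X')$), which is how Otwinowska's result is meant to be read.
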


\begin{proof}
 
Let $X \in \NL(\Lambda)$.
There exists a maximal lattice $\Lambda' \subset H^2(X,\mb{Z})$ such that $\Lambda'$ remains of type $(1,1)$ in $\NL(\Lambda)$ i.e., $\NL(\Lambda)_{\red}=\NL(\Lambda')_{\red}$. 
Now, there exists 
a surface $X' \in \NL(\Lambda')$ such that the N\'{e}ron-Severi group $\mr{NS}(X')$ is the translate (under deformation from $X$ to $X'$) of 
$\Lambda'$ in $H^2(X',\mb{Z})$ which we again denote by $\Lambda'$ for convinience of notation. Then Theorem \ref{gh22} implies that  any $\gamma \in \Lambda'$ is of  the form $\sum_i a_i[C_i]+bH_X$ with $\deg(C_i) \le r-1$.
So, $\Lambda'$ can be generated by classes of curves of degree at most $r-1$ and the class of the very ample line bundle $H_X$. 
Now, the class of $[C_i]$ remains of type $(1,1)$ along $\NL(\Lambda')$. From Lemma \ref{hf11} it follows that $C_i$ is semi-regular. Then, \cite[Theorem $7.1$]{b1}
implies that the class of $[C_i]$ remains effective along $\NL(\Lambda')$.
This proves the proposition.
\end{proof}

\begin{para}
 We now recall a result due to Eisenbud and Harris which we use in the next lemma. Let $P$ be a Hilbert polynomial of a curve in $\p3$ of degree $e$ and $L$ be an irreducible component of $H_P$.
 The corollary after \cite[Theorem $1$]{eis} tells us that,
 \begin{thm}[\cite{eis}]\label{ei1}
  For $e>1$, the dimension of $L$ is less than or equal to $3+e(e+3)/2$.
   \end{thm}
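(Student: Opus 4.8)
The plan is to first exhibit the plane curves as the extremal family, thereby fixing the constant, and then to bound an arbitrary component from above. For the first part, let $C$ be a smooth plane curve of degree $e$ lying in a plane $H \subset \mathbb{P}^3$. The inclusion of normal bundles gives
\[0 \to N_{C/H} \to N_{C/\mathbb{P}^3} \to N_{H/\mathbb{P}^3}|_C \to 0,\]
which here reads $0 \to \mathcal{O}_C(e) \to N_{C/\mathbb{P}^3} \to \mathcal{O}_C(1) \to 0$. A short Riemann--Roch computation on the genus $\binom{e-1}{2}$ curve $C$ gives $h^0(\mathcal{O}_C(e)) = e(e+3)/2$, $h^1(\mathcal{O}_C(e)) = 0$ and $h^0(\mathcal{O}_C(1)) = 3$, whence $h^0(N_{C/\mathbb{P}^3}) = 3 + e(e+3)/2$. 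Since the Hilbert scheme is smooth at such a $C$, the component of plane curves has exactly this dimension; this both pins down the constant and identifies the extremal locus.

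For the upper bound on an arbitrary component $L$, I would pass to a general point $[C]$ and use the tangent-space estimate $\dim L \le \dim T_{[C]} H_P = h^0(C, N_{C/\mathbb{P}^3})$, after reducing to the case that the general member is a reduced curve of degree $e$ that is a local complete intersection at its general point. By adjunction $\det N_{C/\mathbb{P}^3} \cong \omega_C \otimes \mathcal{O}_C(4)$, so Riemann--Roch for the rank-two sheaf $N_{C/\mathbb{P}^3}$ yields $\chi(N_{C/\mathbb{P}^3}) = 4e$, independently of the genus. As $3 + e(e+3)/2 = 4e + \binom{e-2}{2}$, it therefore suffices to prove the uniform estimate
\[h^1(C, N_{C/\mathbb{P}^3}) \le \binom{e-2}{2}.\]

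The heart of the matter, and the step I expect to be the main obstacle, is exactly this cohomological bound. A naive chase is too weak: from $0 \to T_C \to T_{\mathbb{P}^3}|_C \to N_{C/\mathbb{P}^3} \to 0$ together with the restricted Euler sequence one only extracts $h^1(N_{C/\mathbb{P}^3}) \le 4\, h^1(\mathcal{O}_C(1))$, and combining this with Castelnuovo's genus bound loses a constant factor, overshooting $\binom{e-2}{2}$. Instead I would control $h^1(N_{C/\mathbb{P}^3})$ through the postulation of $C$ by restricting to a general hyperplane: writing $\Gamma = C \cap H$ for a set of $e$ points in $H \cong \mathbb{P}^2$, the hyperplane-restriction sequences relate the relevant $h^1$ to the Hilbert function of $\Gamma$, and for $C$ irreducible and nondegenerate the uniform position principle forces $\Gamma$ to impose the maximal number of conditions on plane curves of each degree. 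A Castelnuovo-type analysis then yields the bound, with equality pushed to the case where $\Gamma$ degenerates in the extremal way, which one identifies with planar $C$. The delicate points I anticipate are the equality discussion --- ruling out nondegenerate curves that reach the bound --- and the bookkeeping for a general member that is reducible, singular, or nonreduced, as well as the possibility that $H_P$ is obstructed at $[C]$; in the latter situations I would argue separately, either by specialising to an irreducible curve of the same degree or by a direct dimension count, that the dimension stays strictly below the planar value.
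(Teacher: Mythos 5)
First, a framing remark: the paper does not prove this statement at all --- it is imported verbatim from Eisenbud--Harris (``the corollary after Theorem 1'' of the cited reference) --- so your attempt has to stand on its own merits, and it does not. The scaffolding you build is correct as far as it goes: the plane curves form a family of dimension exactly $3+e(e+3)/2$ at whose points $H_P$ is smooth, $\chi(N_{C/\mathbb{P}^3})=4e$ for a local complete intersection curve, the identity $3+e(e+3)/2=4e+\binom{e-2}{2}$ holds, and $\dim L\le h^0(N_{C/\mathbb{P}^3})$ at a general point. But this reduction concentrates the \emph{entire} content of the theorem into the inequality $h^1(C,N_{C/\mathbb{P}^3})\le\binom{e-2}{2}$, which you do not prove and yourself flag as the main obstacle. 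Worse, the one concrete idea you offer for it is false: the uniform position principle does \emph{not} say that a general plane section $\Gamma$ of an irreducible nondegenerate space curve imposes the maximal number of conditions on plane curves of each degree; it says only that all subsets of $\Gamma$ of equal cardinality impose equally many conditions. General plane sections genuinely fail to be in general position --- for instance the $9$ points cut on a plane by a complete intersection of two cubic surfaces impose only $8$ conditions on plane cubics, by Cayley--Bacharach --- and Castelnuovo theory exists precisely to control this failure, not to rule it out. So the heart of the proof is missing, and the sketch offered in its place does not work as stated.

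Second, the cases you defer at the end are not peripheral in this paper: Theorem \ref{ei1} is applied in Lemma \ref{gh4} to the Hilbert scheme of $C_1\cup\dots\cup C_{t'}$, a \emph{reducible} reduced curve, and such curves (e.g.\ several lines through a point) need not be local complete intersections. For a non-lci curve the normal sheaf need not be locally free, the adjunction formula $\det N_{C/\mathbb{P}^3}\cong\omega_C\otimes\mathcal{O}_C(4)$ and with it $\chi(N_{C/\mathbb{P}^3})=4e$ can fail, so your Riemann--Roch step collapses exactly where the result is needed; being lci at a general point is not enough. Your fallback of ``specialising to an irreducible curve of the same degree'' proves nothing about dimension: a specialization lands in the closure of some locus (possibly a different component) and yields no inequality for $\dim L$. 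Finally, note a structural worry with the whole route: bounding $\dim L$ by $h^0(N_{C/\mathbb{P}^3})$ at a general point would prove the statement $h^0(N_{C/\mathbb{P}^3})\le 3+e(e+3)/2$ for the general member of every component, which is strictly stronger than the theorem, since on obstructed components (they exist, by Mumford) one has $h^0(N_{C/\mathbb{P}^3})>\dim L$ generically; it is not clear that this stronger statement is even true, which is presumably why the actual Eisenbud--Harris argument bounds the dimension of the family directly rather than the tangent space.
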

\end{para}

\begin{lem}\label{gh4}
Let $\Lambda$ be a prime augmented lattice of rank $t+1$ on a degree $d$ surface, generated by irreducible curves $C_i$ for $i=1,...,t$ for some positive integer $t$
and $\deg(C_i) \le r-1$. Suppose $\codim \NL(\Lambda)\le (r-1)d$. Then, $\sum_{i=1}^t \deg(C_i) \le (r-1)$.
\end{lem}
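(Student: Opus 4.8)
The plan is to argue by contradiction using the dimension formula of Proposition \ref{gh12}. Suppose $\sum_{i=1}^t e_i \ge r$, where $e_i := \deg(C_i)$. Since each $e_i \le r-1$, I would pass to a sub-collection of the generators: keeping the notation $C_1,\dots,C_s$ after reindexing, I add generators one at a time until the running total of degrees first reaches $r$, so that $e'' := \sum_{i=1}^s e_i$ satisfies $r \le e'' \le 2r-2$ (the last curve added has degree $\le r-1$, and $s\ge 2$ since a single generator has degree $<r$). Let $\Lambda'' \subseteq \Lambda$ be the prime augmented sub-lattice generated by $C_1,\dots,C_s$ and put $C'' = C_1\cup\cdots\cup C_s$. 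Because $\Lambda''$ imposes fewer Hodge conditions we have $\NL(\Lambda)\subseteq \NL(\Lambda'')$, hence the component of $\NL(\Lambda'')$ through $X$ has $\codim \NL(\Lambda'') \le \codim \NL(\Lambda)\le (r-1)d$. Moreover $e''+4\le 2r+2\le r^3\le d$, so Proposition \ref{gh12} applies to $\Lambda''$ (its proof is a fibre-dimension count valid for any number $\ge 2$ of generators) and gives
\[ \codim \ov{\NL(\Lambda'')} = \codim I_d(C'') - \dim L_{\Lambda''}. \]
The goal is then to bound the right-hand side from below and contradict $\codim \NL(\Lambda'')\le (r-1)d$.

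For the first term, since $d\gg e''$ the sheaves $\I_{C''}(d)$ and $\mo_{C''}(d)$ have vanishing $H^1$ by the regularity argument already used in Lemma \ref{hf11}, so $\codim I_d(C'') = h^0(\mo_{C''}(d)) = e''d + 1 - \rho_a(C'')$ by Riemann--Roch on $C''$, and I would use the sharp genus bound $\rho_a(C'')\le \binom{e''-1}{2}$ for a reduced degree-$e''$ curve in $\p3$. For the second term I would project $L_{\Lambda''}$ into $\prod_{i=1}^s H_{P_i}$ and bound it component by component: by Eisenbud--Harris (Theorem \ref{ei1}), $\dim L_{\Lambda''} \le \sum_{i=1}^s \bigl(3 + \tfrac{e_i(e_i+3)}{2}\bigr)$. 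The crucial point of passing to the generators is that $e_i\le r-1$ makes this linear in $e''$, namely $\dim L_{\Lambda''}\le \tfrac{r+8}{2}\,e''$, while the genus contributes only $\binom{e''-1}{2} = O\bigl((e'')^2\bigr) = O(r^2)$.

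Combining these gives $\codim \NL(\Lambda'') \ge e''d + 1 - \binom{e''-1}{2} - \tfrac{r+8}{2}e''$. Since $r\le e''\le 2r-2$ the subtracted terms are $O(r^2)$ whereas the leading term is $e''d\ge rd$; the quantity $(e''-(r-1))d - \binom{e''-1}{2} - \tfrac{r+8}{2}e'' + 1$ is concave in $e''$, and using $d\ge r^3$ one checks it is strictly positive at both endpoints $e''=r$ (where it reduces to $d>r^2+\tfrac{5r}{2}$) and $e''=2r-2$ (where it reduces to $d>3(r+2)$), hence positive on the whole interval. This forces $\codim \NL(\Lambda'') > (r-1)d$, contradicting the previous paragraph, so $\sum_i e_i\le r-1$.

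The hard part is that the genus bound $\rho_a\le\binom{e''-1}{2}$ is sharp only for near-coplanar configurations, and applying it directly to the full lattice $\Lambda$ produces a term $\binom{e-1}{2}$ that competes with the linear growth $(e-(r-1))d$ once $e\gtrsim\sqrt{d}$, so no contradiction would ensue at the level of $\Lambda$ itself. The device that resolves this is exactly the restriction to a sub-lattice of total degree $O(r)$: it simultaneously keeps $\dim L_{\Lambda''}$ \emph{linear} in the total degree (by feeding $e_i\le r-1$ into Eisenbud--Harris rather than bounding the Hilbert scheme of the union) and pins the genus contribution at $O(r^2)$, which is dominated by $d\ge r^3$. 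The remaining points to verify with care are that this restriction is legitimate, i.e.\ that $\NL(\Lambda)\subseteq\NL(\Lambda'')$ and that the hypothesis $e''+4\le d$ of Proposition \ref{gh12} holds; both follow from $e''\le 2r-2$ together with $d\ge r^3$.
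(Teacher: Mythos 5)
Your proof is correct and takes essentially the same route as the paper's: assume $\sum_i\deg(C_i)>r-1$, truncate to a sub-collection of total degree between $r$ and $2(r-1)$, and contradict $\codim\NL(\Lambda)\le (r-1)d$ by combining Proposition \ref{gh12}, the plane-curve genus bound $\binom{e-1}{2}$, Eisenbud--Harris, and $d\ge r^3$. The only (immaterial) differences are that the paper wraps the argument in an induction on $t$ and applies Eisenbud--Harris once to the Hilbert scheme of the union curve with $e=2(r-1)$, whereas you apply it componentwise and check positivity at the endpoints by concavity; both give the same $O(r^2)$ error terms dominated by $d\ge r^3$.
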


\begin{proof}
We prove this by induction on $t$. This is trivially true for $t=1$.
Suppose this is true for all $t \le m$. 

Assume this is not true for $t=m+1$. In other words, there exists a prime lattice $\Lambda$ minimally generated by $m+1$ curves such that 
$\sum_i \deg(C_i)>r-1$. 
This implies (after rearranging the indices if necessary) there exists an integer $0 <t' \le m+1$ such that $C_1,...,C_{t'}$ satisfies $\sum_{i=1}^{t'} \deg(C_i)>(r-1)$ and $\sum_{i=1}^{t'-1} \deg(C_i) \le (r-1)$.
Then, $\sum_{i=1}^{t'} \deg(C_i) \le 2(r-1)$.
Denote by $P$ the Hilbert polynomial of the curve $C_1 \bigcup ... \bigcup C_{t'}$.
We replace $e$ by $2(r-1)$ in Theorem \ref{ei1} and conclude that the dimension of the Hilbert scheme $H_P$ 
is less than or equal to $3+(r-1)(2r+1)$. Using Proposition \ref{gh12}, the codimension 
of $\NL([C_1+...+C_{t'}])$ is greater than or equal to $\codim I_d(C_1 \bigcup ... \bigcup C_{t'})-\dim H_P$. Since $r-1<\deg(C_1+...+C_{t'})\le 2(r-1)$ and $d \ge r^3$, we get the following inequality using the upper bound on 
the arithmetic genus of a curve of degree less than or equal to $2(r-1)$:
\begin{eqnarray*}
 \codim \NL([C_1+...+C_{t'}]) & \ge &\codim I_d(C_1+...C_{t'})-\dim H_P\\
 &\ge& (rd-(2r-3)(2r-4)/2+1)-(3+(r-1)(2r+1))\\
 &=&(r-1)d+(d-(2r-3)(2r-4)/2-3-(r-1)(2r+1)+1)\\
 &>&(r-1)d
\end{eqnarray*}
contradicting the assumption.
\end{proof}

\begin{prop}\label{gh13}
Let $\Lambda$ be an augmented lattice of rank $r$ contained in a degree $d$ surface, generated by $l_i$ for $i=1,...,r-1$, where $l_i$ are lines for 
all $i$. Suppose $r \ge 3$. Then, $\codim \NL(\Lambda) \ge \N_d(r)$. Furthermore, if $l_i$ are on the same plane we have
an equality.
\end{prop}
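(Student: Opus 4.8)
The plan is to evaluate both terms in the formula of Proposition \ref{gh12} for the curve $C = l_1 \cup \cdots \cup l_{r-1}$. First I would record that for $d \gg r$ the ideal sheaf $\I_C$ is $(r-1)$-regular, so $h^1(\I_C(d))=0$ and hence $\codim I_d(C)=h^0(\mo_C(d))$; since also $h^1(\mo_C(d))=0$, Riemann--Roch on $C$ gives
\[\codim I_d(C) = (r-1)d + 1 - p_a(C).\]
Because each $l_i$ is a line, the normalization of $C$ is a disjoint union of $r-1$ copies of $\p3$'s line $\cong\mathbb{P}^1$ and every singularity is an ordinary multiple point; writing $m_j$ for the number of lines through the $j$-th such point and $N=\sum_j\binom{m_j}{2}$, one gets $\chi(\mo_C)=(r-1)-N$, i.e.\ $p_a(C)=N-(r-2)$. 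Substituting into Proposition \ref{gh12}, the assertion $\codim\overline{\NL(\Lambda)}\ge\N_d(r)$ becomes the purely combinatorial bound
\[\dim L_\Lambda + N \ \le\ 4(r-1) + \binom{r-3}{2},\]
where $\dim L_\Lambda$ is the dimension of the space of line-configurations of the given incidence type (which, for $d\gg r$, is exactly the relevant component, since every such configuration lies on smooth degree $d$ surfaces).

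For the equality clause I would treat the coplanar case directly. If the $l_i$ lie in a common plane $H$, then $C$ is a plane curve of degree $r-1$, so $p_a(C)=\binom{r-2}{2}$ regardless of how the lines meet, while $L_\Lambda$ consists of a choice of plane ($3$ parameters) together with $r-1$ lines inside it ($2(r-1)$ parameters), so $\dim L_\Lambda = 2r+1$. Feeding $p_a(C)=\binom{r-2}{2}$ and $\dim L_\Lambda=2r+1$ into Proposition \ref{gh12} and simplifying by means of $\binom{r-2}{2}-\binom{r-3}{2}=r-3$ yields exactly $\codim\overline{\NL(\Lambda)}=(r-1)(d-3)-\binom{r-3}{2}=\N_d(r)$.

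For the lower bound I would prove the displayed inequality by induction on the number $k:=r-1$ of lines, the content being that adjoining one line raises $\dim L_\Lambda + p_a(C)$ by at most $k$ (the base cases $k\le 2$ being immediate). Let $t$ be the number of previously placed lines that the new line $l_k$ meets. On the cohomological side, the increment $\binom{m+1}{2}-\binom{m}{2}=m$ of each multiple point shows that $N$ grows by exactly $t$, hence $p_a$ grows by $t-1$. On the geometric side, the relative dimension of $L_\Lambda$ over the configuration of the first $k-1$ lines is the dimension of the family of lines meeting the $t$ prescribed ones; realizing the Grassmannian of lines as the Pl\"ucker quadric in $\mathbb{P}^5$ and each incidence as a hyperplane section of it, this family has dimension $4-t$ for $t\le 2$, and at most $2$ for $t\ge 3$ (two hyperplane sections already cut out a surface, and a third can preserve the dimension $2$ only when the three lines are coplanar or concurrent). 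Thus the joint increment is $(4-t)+(t-1)=3$ when $t\le 2$ and is at most $2+(t-1)=t+1\le k$ when $t\ge 3$; since $t\le k-1$ and $k\ge 3$ in the inductive range, it is $\le k$ in every case, closing the induction.

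The main obstacle is precisely the relative-dimension estimate for $t\ge 3$: one must exclude a family of common transversals of dimension greater than $2$, which amounts to the classical fact that pairwise-meeting lines in $\p3$ must be either coplanar or concurrent. This is where the proposition is decided, for it is exactly these two extremal configurations that saturate the bound, and the coplanar one thereby produces a component of $\NL_{r,d}$ of the sharp codimension $\N_d(r)$.
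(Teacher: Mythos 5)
Your proposal follows essentially the same route as the paper's own proof: both rest on Proposition \ref{gh12} and an induction on the number of lines, with the same case analysis on how many previously placed lines the new line meets, and the same dimension count for the family of lines satisfying those incidences (your ``$4-t$ for $t\le 2$, at most $2$ for $t\ge 3$'' dichotomy plays the role of the paper's bounds $t\le 2,3,4$ on $\dim L_{\Lambda}-\dim L_{\Lambda'}$, which the paper asserts and you justify via the Pl\"ucker quadric). Your genus/Riemann--Roch bookkeeping is just a rewriting of the paper's identity $\codim I_d(C\cup l_{r-1})=\codim I_d(C)+\codim I_d(l_{r-1})-l_{r-1}\cdot C$, and your direct computation in the coplanar case makes explicit what the paper leaves implicit in its case $(1)$.

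One inaccuracy should be repaired. The formula $\chi(\mo_C)=(r-1)-N$ with $N=\sum_j\binom{m_j}{2}$ is not correct for an arbitrary union of lines with ``ordinary'' multiple points: for three concurrent \emph{non-coplanar} lines (the coordinate axes) the delta invariant is $2$, not $\binom{3}{2}=3$; in general one only has $p_a(C)\le N-(r-2)$, with equality exactly when each multiple point is planar. So your displayed reduction is not an equivalence but a one-way implication. This does not break the argument: the inequality goes in the direction you need, so your combinatorial bound $\dim L_\Lambda+N\le 4(r-1)+\binom{r-3}{2}$ still implies $\codim\NL(\Lambda)\ge\N_d(r)$, and your equality case uses the correct plane-curve genus $\binom{r-2}{2}$ independently of the formula. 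Alternatively, you can note that every configuration occurring in $L_\Lambda$ lies on a \emph{smooth} surface, where all lines through a common point lie in the embedded tangent plane and hence are coplanar, so the formula is in fact exact for the configurations that actually arise.
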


\begin{proof}
We prove this by induction.
 Using Proposition \ref{gh12}, \[\codim \NL(\Lambda)=\codim I_d(\bigcup_{i=1}^{r-1} l_i)-\dim L_{\Lambda}.\]
 If $r=3$, $\codim \NL(\Lambda)=2d-6=\N_d(3)$.
 Assume the result holds true for all $r \le m$ for some integer $m \ge 4$. We now prove for $r=m+1$.
Denote by $\Lambda'$ the lattice generated by $l_i$ for $i=1,...,r-2$ and by $C$ the curve $\bigcup_{i=1}^{r-2} l_i$. 
Let $t:=\dim L_{\Lambda}-\dim L_{\Lambda'}$. Note that, $t \le 4$. 

Note that $l_{r-1}.C \le r-2$.
Denote by $\epsilon:=r-2-l_{r-1}.C$. Comparing with the list of values of $t$, we see 
\begin{enumerate}
 \item If $\epsilon=0$ then $l_{r-1}.C=r-2$ and $t \le 2$. In particular, for a fixed curve $C$ there is a $1$-$1$ correspondence between the 
 set of choices of $l_{r-1}$ intersecting $C$ in $r-2$ points and the set of planes $P$ interesting $C$ at $r-2$ distinct collinear points. If for a generic choice of $P$, $P \bigcap C$ are $r-2$ distinct collinear points
 then all lines in $C$ should lie on the same plane. In that case, $l_{r-1}$ intersect $C$ at $r-2$ points if and only if $l_{r-1}$ is on the same plane as $C$, hence $t=2$ (dimension of the space of lines in $\mb{P}^2$). 
 If this is not the case i.e., 
 a generic choice of $P$ does not intersect $C$ in $r-2$ distinct collinear points, then $t \le 2$.
\item $\epsilon=r-2$ and $t=4$ if $l_{r-1}$ does not intersect $C$.
\item $0 < \epsilon <r-2$ and $t \le 3$ otherwise.
\end{enumerate}

Now, 
\begin{eqnarray*}
 \codim \NL(\Lambda)&=&\codim I_d(C \bigcup l_{r-1})-\dim L_\Lambda\\
 &=&(\codim I_d(C)+\codim I_d(l_{r-1})-l_{r-1}.C)-\dim L_{\Lambda}\\
 &=&(\codim I_d(C)-\dim L_{\Lambda'})+(\codim I_d(l_{r-1})-t)-(r-2-\epsilon).
\end{eqnarray*}
Writing $t=4-(4-t)$ and using the induction step, we see that the right hand side of this equation is greater than
or equal to 
\begin{eqnarray*}
 ((r-2)(d-3)-\binom{r-4}{2})+(d-3)-(r-2-\epsilon)+(4-t)\\
 =((r-2)(d-3)-\binom{r-3}{2})+\epsilon-t+2
\end{eqnarray*}
Note that $\epsilon-t+2 \ge 0$ from the three cases considered above. Substituting this inequality gives us
$\codim \NL(\Lambda) \ge \N_d(r)$. 
 \end{proof}

\begin{thm}\label{gh6}
Let $L$ be an irreducible component of $\NL_{r,d}$. Then, $\codim L \ge (r-1)(d-3)-\binom{r-3}{2}$.
Furthermore, there exists a component $L$ of $\NL_{r,d}$ of this codimension parametrizing surfaces containing $r-1$ coplanar lines.
\end{thm}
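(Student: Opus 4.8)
The plan is to assemble the preceding results into a lower bound valid for every irreducible component, and then to exhibit one component attaining equality. For the lower bound, I would first observe that $\N_d(r) < (r-1)d$ for all $r \ge 3$, since $(r-1)d - \N_d(r) = 3(r-1) + \binom{r-3}{2} > 0$; hence if $\codim L > (r-1)d$ there is nothing to prove, and I may assume $\codim L \le (r-1)d$. Then Proposition \ref{c3} realizes $L$, locally near a general $X \in L$, as $\NL(\Lambda)$ for a prime augmented lattice $\Lambda$ of rank at least $r$. Because $\codim \ov{\NL(\Lambda)} = \codim L \le (r-1)d$, Proposition \ref{gh14} replaces $\Lambda$ by a prime lattice $\Lambda'$ of rank at least $r$, generated by integral curves $C_1, \dots, C_s$ of degree $\le r-1$, with $\ov{\NL(\Lambda)}_{\red} = \ov{\NL(\Lambda')}_{\red}$; here $s \ge r-1$ since $\rk \Lambda' = s+1 \ge r$.

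The crucial step is then a degree count. Applying Lemma \ref{gh4} to $\Lambda'$ gives $\sum_{i=1}^s \deg(C_i) \le r-1$. Since each $\deg(C_i) \ge 1$ and $s \ge r-1$, the chain $r-1 \le s \le \sum_{i=1}^s \deg(C_i) \le r-1$ forces every inequality to be an equality: $s = r-1$ and every $C_i$ is a line. Thus $\Lambda'$ is an augmented lattice of rank $r$ generated by $r-1$ lines, and Proposition \ref{gh13} gives $\codim \NL(\Lambda') \ge \N_d(r)$. As $\codim L = \codim \ov{\NL(\Lambda')}$, this yields $\codim L \ge \N_d(r)$, the desired bound.

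For the furthermore part, I would take $\Lambda$ to be the rank $r$ augmented lattice generated by $r-1$ coplanar lines on a general surface carrying them. The equality case of Proposition \ref{gh13} gives $\codim \ov{\NL(\Lambda)} = \N_d(r)$. Since $\ov{\NL(\Lambda)}$ is irreducible and contained in $\NL_{r,d}$, it lies in some irreducible component $L$; the lower bound just proved forces $\codim L \ge \N_d(r) = \codim \ov{\NL(\Lambda)}$, and hence $\ov{\NL(\Lambda)} = L$ by irreducibility together with equality of dimensions. This $L$ is the required component of codimension $\N_d(r)$ parametrizing surfaces containing $r-1$ coplanar lines.

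The main obstacle is really the pigeonhole step forcing all generating curves to be lines; once that is in place, the remainder is bookkeeping built on the cited propositions. A secondary point needing care is the rank-$r$ claim for the coplanar configuration, i.e.\ the linear independence of the $r-1$ lines together with $H_X$ in $H^2(X,\mb{Z})$: the residual curve of the plane section (of degree $d-(r-1)$) is a separate algebraic class, so no relation is forced among the chosen generators, and the rank of $\Lambda$ is genuinely $r$ for a general such surface.
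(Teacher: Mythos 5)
Your proof is correct and follows essentially the same route as the paper's: Proposition \ref{c3} to realize $L$ locally as $\NL(\Lambda)$ for a prime lattice of rank $\ge r$, Proposition \ref{gh14} to pass to generators of degree $\le r-1$, Lemma \ref{gh4} plus the pigeonhole count to force exactly $r-1$ lines, and Proposition \ref{gh13} for the lower bound and the coplanar equality case. In fact you are more careful than the paper at two points it glosses over: the preliminary reduction to $\codim L \le (r-1)d$ (needed to legitimately invoke Proposition \ref{gh14} and Lemma \ref{gh4}, and justified by your observation that $\N_d(r) < (r-1)d$), and the closing maximality argument showing that $\ov{\NL(\Lambda)}$ for $r-1$ coplanar lines is itself a full irreducible component of $\NL_{r,d}$ rather than a proper subvariety of one.
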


\begin{proof}
It suffices to prove $L$ is locally of the form $\NL(\Lambda)$ for $\Lambda$ a rank $r-1$ prime lattice generated by $[l_i]$
for $i=1,...,r$, where $l_i$ are lines on the same plane. Using Proposition \ref{c3}, $L$ is of the form $\NL(\Lambda)$ for a rank $r-1$ prime
lattice $\Lambda$. Using Proposition \ref{gh14} we can assume that there exists a prime lattice $\Lambda'$ of rank $t$ 
greater than or equal to $r-1$ generated by classes of curves, say $C_i'$ for $i=1,...,t$ of degree less than or equal to $r-1$ such that 
$\NL(\Lambda)=\NL(\Lambda')$. Lemma \ref{gh4} implies that $t=r-1$ and $\deg(C_i')=1$ for all $i=1,...,r-1$.
Finally, the theorem follows from Proposition \ref{gh13}.
\end{proof}

\bibliographystyle{alpha}
 \bibliography{researchbib}
 
\end{document}